\title
[Divergence formulas for perturbations of transfer operators] 
{
Recursive divergence formulas for perturbing unstable transfer operators and physical measures
}
\begin{document}

\begin{abstract}

We show that the derivative of the (measure) transfer operator with respect to the parameter of the map is a divergence. Then, for physical measures of discrete-time hyperbolic chaotic systems, we derive an equivariant divergence formula for the unstable perturbation of transfer operators along unstable manifolds. This formula and hence the linear response, the parameter-derivative of physical measures, can be sampled by recursively computing only $2u$ many vectors on one orbit, where $u$ is the unstable dimension. The numerical implementation of this formula in \cite{far} is neither cursed by dimensionality nor the sensitive dependence on initial conditions.

\smallskip
\noindent \textbf{Keywords.}
transfer operator, 
unstable divergence,
SRB measures, 
linear response, 
fast response algorithm.
\end{abstract}

\maketitle

\section{Introduction}

\subsection{Literature review}
\hfill\vspace{0.1in}
\label{s:mao}

The transfer operator, also known as the Ruelle-Perron-Frobenius operator, describes how the density of a measure is evolved by a map, and is frequently used to study the behavior of dynamical systems.
The transfer operator was historically used for expanding maps because it makes the density smoother. 
The anisotropic Banach space of Gouëzel, Liverani, and Baladi extends the operator theory to hyperbolic maps, which has both expanding and contracting directions \cite{Gouezel2006,Gouezel2008,Baladi2017}.
The physical measure, or the SRB measure, has the eigenvalue 1, so it encodes the long-time statistics of the system, and is typically singular with respect to the Lebesgue measure \cite{srbmap,srbflow,Pacifico2022}.

The derivative of the transfer operator with respect to system parameters is useful in several settings, especially in linear response, which is the derivative of the physical measure \cite{Ruelle_diff_maps,Dolgopyat2004,Baladi2007,Gallavotti1996}.
The operator formula for the linear response (see \cref{s:two_versions})
is particularly attractive in numerical computations because it is not affected by the exponential growth of unstable vectors.
Since the physical measure is typically singular, researchers need to use finite-elements to approximate and mollify the measure.

However, When the phase space is high-dimensional, the cost of approximating a measure by finite-elements
is cursed by dimensionality (see \cref{s:costEst} for a cost estimation).
Given a $C^2$ objective function $\Phi$, the more efficient way to sample the physical measure $\rho$ of $f$ is to `sample by an orbit', that is
\[ \begin{split}
  \rho(\Phi) = \int \Phi  \,
  d\rho(x)
  \approx \frac 1K \sum_{k=1}^K \Phi(x_k),
  \quad \textnormal{where} \quad 
  x_{k+1} = f(x_k).
\end{split} \]
This approach's main cost is using a recursive relation $f$ to compute an orbit $\{x_i\}_{k=1}^K$, where each $x_i$ is essentially an $M$-dimensional vector, $M$ being the dimension of the system.

It is natural to ask whether the derivative of the transfer operator and hence the linear response can also be sampled by an orbit, that is, by a formula which involves computing several vectors recursively along a single orbit.
This is impossible for the entire derivative operator, which is typically singular for physical measures, and is not pointwise defined.
Similarly, the two most well-known linear response formulas, the ensemble formula and the operator formula, involve distributional derivatives,
which are not well-defined pointwise.

However, we typically only need the transfer operator to handle the unstable perturbations, which turns out to be well-defined pointwise.
There were pioneering works concerning pointwise formulas of the unstable part of the linear response, though they were not very clearly related to the perturbation of 
transfer operators within unstable manifolds,
which acts on conditional measures.
Ruelle mentioned how to derive a pointwise defined formula for the unstable divergence, but no explicit formulas were given \cite[lemma 2]{Ruelle_diff_maps_erratum}.
Gouëzel and Liverani gave an explicit pointwise formula via a cohomologous potential function, but the differentiation is in the stable subspace, which typically has very high dimension \cite[proposition 8.1]{Gouezel2008}.
No recursive formulas were given, and even the potential formulas are likely to involve the evolution of a lot of vectors.
It is also difficult to obtain coordinate-independent formulas.

In this paper, we derive a new divergence formula for the unstable perturbation of unstable transfer operators on physical measures. 
We then give a new interpretation of the unstable part of the linear response by such unstable transfer operators.
We think this interpretation has a more direct physical meaning compared to previous works on linear responses, which typically involve distributional derivatives or require moving to the sequence space.
Also, our formula is coordinate independent: 
it only requires a basis of the unstable subspace, but does not specify the individual vectors of a basis.
More importantly, our formulas are recursive: to evaluate the formula, we only need to track the evolution of $2u$ vectors or covectors.
Here $u$ is the dimension of the unstable subspace; we also use superscript $u$ to denote quantities related to the unstable subspace.
This number of recursive relations is likely to be minimal.

Our work bridges two previously competing approaches for computing the linear response, the orbit/ensemble approach and the measure/operator approach.
That is, we should add up the orbit change caused by stable/shadowing perturbations and the measure change caused by unstable perturbations; both changes are sampled by an orbit.
Our work may be viewed as the generalization of the well-known MCMC (Markov Chain Monte Carlo) method for sampling derivatives of transfer operators and physical measures \cite{Metropolis1953,Chorin2009}.

We compare the current paper with our other papers on related topics.
The current paper is inspired by our recent numerical algorithm, the fast (forward) response algorithm, for computing the linear response on a sample orbit \cite{fr}.
The unstable part of the fast forward response algorithm runs forward on an orbit, and the cost is linear to the number of parameters and observables.
In comparison, the current paper is the adjoint theory of sampling linear response on an orbit.
The numerical implementation of the main results in this paper is in \cite{far}.
The cost of the adjoint method for the unstable part of the linear response is independent of the number of parameters $\gamma$ and observables $\Phi$.
In other words, if we have several $X$'s, where each $X$ gives the perturbation of $f$ corresponding to changing each $\gamma$, then the cost to compute the linear responses of all $X$'s is almost the same as that of only one $X$.
Moreover, this paper generalizes the previous results to derivative operators using a new and intuitive proof by transfer operators, which is useful for the study of transient perturbations.
Finally, the papers \cite{Ni_NILSS_JCP,Ni_nilsas,Ni_asl} concerns mainly about shadowing part of the linear response, whereas the current paper concerns the unstable part, though it also uses shadowing as a tool.

\subsection{Main results}
\hfill\vspace{0.1in}
\label{s:mainrs}

As a warm up, in \cref{s:formula}, we start with the easier case where we are given a measure with a smooth density function.
Lemma~\ref{t:div} gives a divergence formula for $\delta \tL$, which is the derivative of the transfer operator $\tL$ with respect to the parameter $\gamma$ at $\gamma=0$.
Here $\tL$ is the transfer operator of $\tf$, which is a one-parameter family of maps parameterized by $\gamma$, and $\gamma\mapsto\tf$ is $C^1$ from $\R$ to $C^3$ maps on the background manifold $\cM$.
This is just the mass continuity equation on Riemannian manifolds \cite{Lamb1924}.
It can be proved simply by an integration by parts, but we shall also give a pointwise proof, which can be generalized to the more complicated scenario later on.

\begin{restatable}[mass continuity equation]{lemma}{goldbach}
\label{t:div}
For a measure with fixed smooth density $h$, and any $\tf$ such that $\delta \tf :=\left. \pp{\tf}\gamma \right| _{\gamma=0} = X$, then where $h>0$ we have
\[ \begin{split}
- \frac{\delta \tL h} h = \frac{\diverg ( h X)} h =:\div_h X.  
\end{split} \]
\end{restatable}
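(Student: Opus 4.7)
The plan is to give the two proofs that the lemma's statement invites: a quick integration-by-parts argument, followed by a pointwise derivation from the local change-of-variables formula. The second one is what the authors flag as the template for the later unstable-manifold analogues, so I would treat it as the main route while recording the first proof as a sanity check.

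For the integration-by-parts proof, I would start from the duality defining the measure transfer operator: for any smooth test function $\phi$,
\[
\int_\cM (\tL h)\,\phi\, dV \;=\; \int_\cM h\,(\phi\circ\tf)\, dV.
\]
Differentiating in $\gamma$ at $\gamma=0$, the left side becomes $\int (\delta\tL\, h)\,\phi\,dV$ and the right side becomes $\int h\, d\phi(X)\,dV$. Integration by parts on the closed manifold $\cM$ rewrites the right side as $-\int \phi\,\diverg(hX)\,dV$. Since $\phi$ is arbitrary, $\delta\tL\, h = -\diverg(hX)$ pointwise, and dividing by $h$ on $\{h>0\}$ gives the stated identity.

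For the pointwise argument, I would start from the local change-of-variables identity
\[
\tL h(\tf(x))\cdot |\det D\tf(x)| \;=\; h(x),
\]
which holds branchwise. Differentiating in $\gamma$ at $\gamma=0$ and applying the chain rule to the composition $\gamma\mapsto (\tL_\gamma h)\circ\tf_\gamma$ produces a $\delta\tL\, h$ term together with $\nabla h\cdot X$, while Jacobi's formula contributes $\partial_\gamma \log|\det D\tf_\gamma|\big|_{\gamma=0}=\diverg X$. Summing these three contributions yields $\delta\tL\, h + \nabla h\cdot X + h\,\diverg X = 0$, i.e., $-\delta\tL\, h = \diverg(hX)$, and dividing by $h$ completes the proof.

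The computation itself is short either way; the only conceptual care needed is pinning down conventions at $\gamma=0$ (effectively identifying $X$ with an honest vector field on $\cM$ so both sides of the claimed equation live in the same space) and keeping track of which $|\det D\tf|$ factor contributes the $\diverg X$ piece. I would feature the pointwise derivation prominently: the integration-by-parts shortcut disappears in the unstable-manifold setting later in the paper, because conditional measures are defined only along unstable leaves and there is no ambient duality to integrate against, whereas the branchwise Jacobian identity has a direct unstable analogue.
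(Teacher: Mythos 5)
Your proposal is correct and mirrors the paper's own treatment: the paper likewise gives both the integration-by-parts argument from the duality $\int \tL h\cdot\Phi = \int h\cdot(\Phi\circ\tf)$ and a pointwise derivation from $\tL h(x) = h(y)/|\tf_*|(y)$ with $y=\tf^{-1}x$, and your determinant/Jacobi's-formula computation of $\delta|\tf_*|=\div X$ matches the matrix-determinant variant the paper records alongside its wedge-product version. The only cosmetic difference is that the paper eliminates boundary terms via compact support of $\Phi$ rather than assuming $\cM$ closed, which is immaterial.
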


In the simple case above, the density $h$ is a priori known, and it is not related to a dynamical system.
But it prepares us for the more interesting case, when the measure is a physical measures of a hyperbolic attractor.
Now physical measures are defined by typical orbits of the underlying dynamical system.
Hence, it is natural to ask if the perturbation of the physical measure can be also expressed by recursive relations, on the same orbit we sample the physical measure.

\Cref{s:fastformula} proves our main result, \cref{t:vdiv}.
Let $\tL^u$ be the transfer operator of $\xi\tf$, which is a map such that $\delta(\xi\tf)=X^u$ (see \cref{s:notations} for detailed definitions), then 
\cref{t:vdiv} is a new formula for $\delta \tL^u$ on the conditional density $\sigma$ on unstable manifolds.
Here $X^u$ is the unstable part of $X$,
$\sigma$ is the density of the conditional measure of the physical measure $\rho$ for a local unstable foliation. Notice that $\sigma$ and $\delta \tL^u \sigma$ may differ by a constant factor, depending on the choice of the neighborhood of the  local foliation, so they can only be defined locally;
but the ratio $\frac{\delta \tL^u \sigma}{\sigma}$ does not depend on that choice, since the constant factors are cancelled, so it is globally well-defined.

The main significance is that this formula is defined pointwise and it involves only $2u$ recursive relations on an orbit, where $u$ is the unstable dimension.
This number should be close to the fewest possible, since we need at least $u$ modes to capture the most significant perturbative behaviors of a chaotic system, that is, there are $u$ many unstable directions.

\begin{restatable}[equivariant divergence formula]{theorem}{mianyang}
\label{t:vdiv}
Let $\sigma$ be the density of the conditional measure of $\rho$, which is the physical measure on a mixing axiom A attractor of a $C^3$ diffeomorphism $f$,
denote $\delta \tf :=\left. \pp{\tf}\gamma \right| _{\gamma=0} = X$,
then
\[ \begin{split}
  \div^u_\sigma X^u
  := - \frac{\delta \tL^u\sigma}{\sigma}
  = \div^v X + (\cS(\div^vf_*)) X.
\end{split} \]
\end{restatable}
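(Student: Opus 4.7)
The plan is to extend the argument of \cref{t:div} from the ambient manifold to the unstable foliation, and then massage the resulting intrinsic unstable divergence into orbit-recursive data. The first step is to apply the mass continuity equation leaf-wise. Since $\tL^u$ is the transfer operator of $\xi\tf$ (a modification of $\tf$ that moves points only in the unstable direction), and $\sigma$ is the smooth conditional density of the SRB measure along a local unstable manifold, repeating the pointwise proof of \cref{t:div} inside each leaf should yield
\[
  -\frac{\delta\tL^u\sigma}{\sigma}= \div^u_\sigma X^u,
\]
the intrinsic $\sigma$-weighted divergence of $X^u$ on the leaf. This identifies the left-hand side of \cref{t:vdiv} with a geometrically natural quantity; the remaining work is to convert $\div^u_\sigma X^u$ into a form computable along a single orbit.

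The central step is to replace this basis-free quantity by a basis-dependent expression whose ingredients evolve recursively. I would fix an equivariant frame of $E^u$ and its dual co-frame along the orbit, so that $\div^v X$, the trace of $\nabla X$ against this frame, is defined without projecting onto $E^u$ and without differentiating $\sigma$. Two discrepancies must then be reconciled: first, $X$ replaces $X^u$, introducing a stable contribution $X^s = X - X^u$; second, the weighting by $\log\sigma$ in $\div^u_\sigma$ is absent in $\div^v$. The stable part $X^s$ can be expressed as a cohomological coboundary via stable shadowing, which amounts to a geometric series in $f_*$ that converges by stable contraction, while the $\log\sigma$ weighting obeys a cocycle with respect to the unstable Jacobian of $f$. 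Telescoping these two contributions along the orbit should merge them into the single recursive correction $(\cS(\div^v f_*))X$.

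The third step is to verify coordinate independence. Each of $\div^v X$ and $\cS(\div^v f_*)$ depends on the choice of equivariant frame, but under a linear change of frame these variations should cancel, because $\cS$ is transported equivariantly by $f_*$ and the change-of-frame matrix obeys a matching cocycle. This is the content of the ``equivariant'' label in the theorem, and is the structural reason why only $2u$ recursive vectors, a frame and a co-frame, suffice rather than a full Jacobian.

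The hardest part will be the pointwise $\rho$-almost everywhere well-definedness of the shadowing operator $\cS$ and of the telescoping argument: since $\rho$ is singular and the stable foliation is only Hölder continuous, the anisotropic Banach framework of \cite{Gouezel2006,Gouezel2008} does not apply directly to pointwise identities. I would handle this by exploiting uniform hyperbolicity of $f$ and $C^3$ regularity along unstable leaves, so that the geometric series defining $\cS$ converges absolutely on the bounded quantities $\div^v f_*$. A related subtlety is that the local normalization of $\sigma$ must drop out of the ratio $\delta\tL^u\sigma/\sigma$, consistent with the remark that this ratio is globally well-defined even though $\sigma$ itself is only local.
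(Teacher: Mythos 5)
Your conceptual decomposition is sound: $\div^u_\sigma X^u$ does split into an intrinsic unstable divergence plus a $\sigma$-weighting term, the $\sigma$-weighting unwinds through the unstable Jacobian cocycle to give the $\sum_{m\ge1}(\div^v f_*)_{-m}f_*^{-m}X^u$ sum, replacing $X^u$ by $X$ introduces the $\sum_{n\ge0}(\div^v f_*)_n f_*^n X^s$ sum, and the split-propagate expansion of $\cS$ collects both sums into $\cS(\div^v f_*)X$. That is the skeleton of the paper's Lemma~\ref{l:expand}. But there are several concrete gaps.

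First, your opening step is exactly what the paper flags as \emph{not} possible. You write that ``repeating the pointwise proof of \cref{t:div} inside each leaf should yield'' the first equality, but $X^u$ is only H\"older, $\nabla_e X^u$ is a distribution, and $\sigma$ is not a priori differentiable in the needed sense, so the pointwise mass-continuity proof does not transfer to the leaf. The paper takes the first equality to be the \emph{definition} of $\div^u_\sigma X^u$; the substance is entirely in producing the right-hand side without ever differentiating $X^u$, $P^u$, $e$, or $\sigma$. Your plan of ``converting'' $\div^u_\sigma X^u$ into the recursive form still passes through exactly those exploding intermediates you acknowledge you must avoid, and your sketch does not say how. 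The paper's device for this is Lemma~\ref{l:volratio}: write $\tL^u\sigma/\sigma$ as a $k\to\infty$ limit of a single ratio $|f_*^{2k}e_{-k}(x_{-k})|/|f_*^k\tf_*f_*^ke_{-k}(y_{-k})|$, using the holonomy map $\xi$ (whose well-definedness needs its own Lemma~\ref{l:xi}) and the absolute-continuity identity for $|\xi_*|$; then Leibniz on the numerator of the derivative hits only $f_*$, $\tf_*$ and $e_{-k}$, and the dominated-splitting estimate of Theorem~\ref{l:alice} controls the error of each term uniformly. Without this, the ``telescoping'' and ``cocycle'' steps are not a proof.

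Second, the coordinate-independence claim is backwards. You say $\div^v X$ and $\cS(\div^v f_*)$ each depend on the frame but their sum does not; in fact each term is separately frame-independent, because $e\in\wedge^u V^u$ and $\eps\in\wedge^u V^{u*}$ are one-dimensional, so the normalized $e$ and $\eps$ are unique up to sign, and every formula uses only $e$ and $\eps$ rather than individual $e_i$, $\eps^i$. There is no cancellation between the two terms to verify.

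Third, the holonomy map $\xi$ is doing real work that your sketch elides: it is what lets $\delta(\xi\tf)=X^u$ hold on the leaf, and its absolute continuity (Eq.~\eqref{e:holo}) is the reason the conditional measure transforms correctly under the perturbed dynamics. You mention $\xi$ only parenthetically; you would need the analogue of Lemma~\ref{l:xi} to even define $\tL^u$ for small $\gamma$, and the infinite-product formula for $1/|\xi_*|$ to absorb the holonomy Jacobian into the same telescoping series as the other terms.
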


Here $\div^v$ is the derivative tensor contracted by the unstable hypercube and its co-hypercube in the adjoint unstable subspace, so $\div^v f_*$ is a covector (see \cref{s:notations}).
$\cS$ is the adjoint shadowing operator, that is, $\omega := \cS(\div^vf_*) $ is the only bounded covector field such that $\omega = f^*\omega + \div^v f_*$.
Note that $X^u$ is not differentiable, so $\div^u_\sigma X^u$, the unstable submanifold divergence under conditional measure $\sigma$, is defined via the equivalence in the smooth situation in theorem~\ref{t:div}.
Hence, here we can not directly use \cref{t:div}, since it involves exploding intermediate quantities.

Section~\ref{s:divu} gives a new interpretation of the unstable part of the linear response by $\delta\tL^u$, and shows how to use the equivariant divergence formula to compute the linear response recursively on an orbit.
We do \textit{not} prove the linear response, the focus is to sample on an orbit.
In high-dimensional phase spaces, sampling by an orbit is much more efficient than finite element methods, whose cost is estimated in appendix~\ref{s:costEst} on a simple example.
More specifically, using the following formula, the linear response is expressed by recursively computing about $2u$ many $M$-dimensional vectors on an orbit.
Here $M$ is the dimension of the system.
Appendix~\ref{a:douyin} gives another proof of this formula using the so-called fast forward formula from a previous paper \cite{fr}.

Let $\rho$ and $\tilde \rho$ denote the SRB measure of $f$ and $\tf\circ f$, let $\Phi:\cM\rightarrow \R$ be a $C^2$ observable function.
First recall that the linear response has the expression \cite{Ruelle_diff_maps}
\[ \begin{split}
  \delta \tilde \rho(\Phi) 
  = \sum_{n\ge0} \rho(f^{n}_{*} X_{-n} (\Phi))
  = S.C. + U.C. .
\end{split} \]
Here $X_{-n}(x):=X(f^{-n}x)$, $x$ being the dummy variable in the above integration, $(f^{n}_{*} X_{-n})(x)$ is a vector at $x$, and $f^{n}_{*} X_{-n} (\Phi)=f^{n}_{*} X_{-n} \cdot \grad \Phi$. 
Here $S.C.$ and $U.C.$ are the so-called shadowing and unstable contribution,
\[ \begin{split}
  S.C. 
  := \sum_{n\ge0} \rho(f^{n}_{*} X^s_{-n} (\Phi))
  - \sum_{n\le-1} \rho(f^{n}_{*} X^u_{-n} (\Phi)) ,
  \\
  U.C 
  := \sum_{n\in \Z } \rho(f^{n}_{*} X^u_{-n} (\Phi)) .
\end{split} \]
Here  $X^u$ and $X^s$ are the unstable and stable part of $X$.
We may also decompose into stable and unstable contributions, $S.C.'$ and $U.C.'$,
\[ \begin{split}
  S.C. '
  := \sum_{n\ge0} \rho(f^{n}_{*} X^s_{-n} (\Phi)),
  \quad \textnormal{} \quad 
  U.C '
  := \sum_{n\ge0 } \rho(f^{n}_{*} X^u_{-n} (\Phi)) .
\end{split} \]
The shadowing and stable contributions are very similar in terms of numerics.
\Cref{t:vdiv} gives a new formula of $U.C.$ and hence the linear response.

\begin{restatable}[fast adjoint formula for linear response]{proposition}{lanren}
\label{t:far}
The shadowing contribution and the unstable contribution of the linear response can be expressed by integrations of quantities from the unperturbed dynamics with respect to $\rho$,
\[ \begin{split}
\delta \tilde \rho(\Phi) 
  = S.C. + U.C. ,
  \quad\quad
  S.C. = \rho (\cS(d\Phi) X) , \\
  U.C. = \lim_{W\rightarrow\infty} \rho(\phi_W \frac {\delta \tL^u \sigma} {\sigma}),
  \quad \textnormal{where} \quad 
  \phi_W:= \sum_{m=-W}^W \Phi\circ f^m .
\end{split} \]
Here $\sigma$ is the density of the conditional measure of $\rho$, and $\frac {\delta \tL^u \sigma} {\sigma}$ is given by theorem~\ref{t:vdiv}.
\end{restatable}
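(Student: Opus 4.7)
I would prove the two contributions separately, each beginning with the same elementary rewriting. By the pullback identity and the $f$-invariance of $\rho$, each ensemble-formula summand satisfies
\[ \begin{split}
  \rho(f^n_* X^\star_{-n}(\Phi)) = \rho\bigl((f^{n*}d\Phi)\cdot X^\star\bigr), \quad \star\in\{u,s\}.
\end{split} \]
For $S.C.$, the resulting double series is precisely the pairing of $X$ with the covector field obtained by iterating $\omega = f^*\omega + d\Phi$ forward on $E^s$ and backward on $E^u$; this is the adjoint shadowing covector $\cS(d\Phi)$ of \cite{Ni_asl}. Each half converges geometrically by hyperbolicity, so $S.C. = \rho(\cS(d\Phi)\, X)$.

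For $U.C.$, the crucial ingredient is the identity
\[ \begin{split}
  \rho\bigl(\Psi\cdot \delta\tL^u\sigma/\sigma\bigr) = \rho(d\Psi \cdot X^u),
\end{split} \]
valid for any $\Psi\in C^1(\cM)$. To derive it, on each unstable foliation box use the defining duality $\int_{W^u}\Psi\cdot \tL^u_\gamma \sigma\,dm^u = \int_{W^u}(\Psi\circ\xi_\gamma\tf_\gamma)\cdot\sigma\,dm^u$ of the unstable transfer operator; differentiate at $\gamma = 0$, using $\delta(\xi\tf) = X^u$; then integrate against the transverse measure to reconstruct $\rho$. Applying the identity with $\Psi = \phi_W$ and using $d(\Phi\circ f^m) = f^{m*}d\Phi$ together with the rewriting above yields
\[ \begin{split}
  \rho\bigl(\phi_W \cdot \delta\tL^u\sigma/\sigma\bigr) = \sum_{m=-W}^{W} \rho\bigl((f^{m*}d\Phi)\cdot X^u\bigr) = \sum_{m=-W}^{W} \rho(f^m_* X^u_{-m}(\Phi)),
\end{split} \]
and the limit $W\to\infty$ produces $U.C.$

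The main obstacle is rigorously establishing the duality identity for $\delta\tL^u\sigma/\sigma$, since $X^u$ is not differentiable (being only H\"older transverse to the unstable foliation), so $\delta\tL^u\sigma$ is not a classical distribution and ordinary leaf-wise integration by parts does not directly apply. This is precisely where \cref{t:vdiv} is essential: it supplies a pointwise, coordinate-independent representative $\delta\tL^u\sigma/\sigma = -\div^u_\sigma X^u$ that lies in $L^1(\rho)$, making the pairing with a $C^1$ function $\Psi$ unambiguous and reducing the duality identity to a statement verifiable from the explicit formula $\div^v X + (\cS(\div^v f_*))\, X$. A secondary point is convergence of the symmetric truncation as $W\to\infty$; on a mixing axiom A attractor this reduces to exponential decay of correlations for $C^2$ observables, which controls the tails $\sum_{|m|>W}\rho((f^{m*}d\Phi)\cdot X^u)$.
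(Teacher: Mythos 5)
Your proposal matches the paper's proof: $S.C.$ is obtained the same way, via the split-propagate expansion of $\cS(d\Phi)$ and $\rho$-invariance, and $U.C.$ via the identity $\rho(\Psi\cdot\delta\tL^u\sigma/\sigma)=\rho(X^u(\Psi))$ applied with $\Psi=\Phi\circ f^m$, derived by leaf-wise integration by parts against the conditional density and then integrating against the factor measure of a Markov partition. The one ingredient the paper makes explicit that your sketch omits is the cancellation of boundary flux terms across the partition (following Ruelle's erratum), which is what globalizes the leaf-wise computation to an identity under $\rho$; in your duality formulation, differentiating $\int\Psi\cdot\tL^u\sigma=\int(\Psi\circ\xi\tf)\cdot\sigma$ at $\gamma=0$ produces a domain-change boundary term that must cancel in the same way.
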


We explain how to compute the above formula on an orbit to overcome the curse of dimensionality.
For a finite $W$, the size of the integrand is about $\sqrt W$, and we can sample the physical measure $\rho$ by an orbit.
Then we need to compute a basis of the unstable subspace.
This can be achieved by pushing forward $u$ many randomly initialized vectors while performing occasional renormalizations, on the same orbit we used to sample $\rho$.
Similarly, we can compute a basis of the adjoint unstable subspace.
With these two basis we can compute the equivariant divergence $\div^v$.
The adjoint unstable subspace is also the main data required by the nonintrusive shadowing algorithm for computing $\cS$.
Note that we only need data obtained at $\gamma=0$, and knowing the expression of $f|_{\gamma=0}$ is sufficient for obtaining these data.

The numerical implementation of our formulas, including a detailed algorithm and numerical examples, is in \cite{far}; 
the so-called fast adjoint response algorithm is very efficient in high dimensional phase spaces; it is also robust in stochastic noise and some nonuniform hyperbolicity.
Its cost is neither cursed by  the dimensionality nor the sensitive dependence on initial conditions, and the cost is almost independent of the number of parameters.

\section{Divergence formula of derivative operator}
\label{s:formula}

We first assume that the measure on which we apply the transfer operator is smooth (means $C^\infty$) and a priori known, then we give a divergence formula for the perturbation of the transfer operator.
The techniques and notations we use for the pointwise proof of this simple case shall prepare us for the proof of \cref{t:vdiv}.
The main difference with \cref{t:vdiv} is that here the measure is not given by an orbit, so we can not sample its perturbation by an orbit.

\subsection{A functional proof}
\label{s:functional}
\hfill\vspace{0.1in}

We denote the background $M$-dimensional Riemannian manifold by $\cM$.
In this paper, we use $\tilde\cdot$ to denote perturbative quantities.
We think of perturbative maps, such as $\tf$ and the $\tL$, as being smoothly parameterized by a small real number $\gamma$, whose values at $\gamma=0$ are the identity.
Let $\tf$ be the perturbation appended to $f$, which is also a smooth diffeomorphism on 
$\cM$.
More specifically, we assume the map $\gamma\mapsto \tf$ is $C^1$ from $\R$ to the family of $C^3$ diffeomorphisms on $\cM$.
The default value of $\gamma$ is zero, and $\gamma$ may vary in a neighborhood of zero in the real numbers.
For a fixed measure with a smooth density function $h$, the transfer operator $\tL$ gives the new density function after pushing forward by $\tf$.
More specifically, $\tL$ of $\tf$ is defined by the duality
\begin{equation} \begin{split} \label{e:defDual}
  \int h \cdot (\Phi \circ \tf) =: \int \tL h \cdot \Phi,
\end{split} \end{equation}
where $\Phi:\cM\rightarrow \R$ is a $C^2$ observable function with compact support.
In this paper, all integrals are taken with respect to the Lebesgue measure, except when another measure is explicitly mentioned.
Note that $\tL$ operates on the entire density function, and $\tL h(x):=(\tL h)(x)$.
We shall refer to $h$ as the `source' of $Lh$.

We are interested in how perturbations in $\gamma$ would affect $\tL$.
Define
\[ \begin{split}
  \delta(\cdot):= \left. \frac{\partial (\cdot)}  {\partial \gamma} \right\rvert_{\gamma=0}.
\end{split} \]
We emphasize that the base value of $\gamma$ is zero, and all derivatives with respect to $\gamma$ in this paper are evaluated at $\gamma=0$.
Define the perturbation vector field $X$ as
\[ \begin{split}
X:= \delta \tf.
\end{split} \]
Note that $\tilde f$ depends on $\gamma$ and $\delta$ is the partial derivative.
Since we only consider the derivatives at $\gamma=0$, hence, we can freely assume the value of $\partial \tf/\partial \gamma$ when $\gamma\neq0$, so long as it is smooth and its value at $\gamma=0$ is $X$.
Without loss of generality, we may assume that $\tf$ is the flow of $X$.
If so, and regarding $\gamma$ as `time', then theorem~\ref{t:div} is exactly the mass continuity equation on Riemannian manifolds.

We define $\diverg_{h}$ as the divergence under the measure with density $h$,
\[ \begin{split}
  \diverg_{h} X :=\frac{ \diverg (h X)}{h} = \diverg (X) +\frac{X(h) }{h},
\end{split} \]
where $X(\cdot)$ is to differentiate a function in the direction $X$, that is, 
$X(h)=\grad h \cdot X$.
For two densities $h'$ and $h''$, if $h'= Ch''$ for a constant $C>0$, then $\diverg_{h'} = \diverg_{h''}$.
Then we prove

\goldbach*

\begin{remark*}[]
When $h=0$ in an open subset, then it typically suffices to use the fact $\delta \tL h =0$ in that open set.
\end{remark*}

\begin{proof}
Differentiate \cref{e:defDual}.
Notice that at $\gamma=0$, $\delta(\Phi \circ \tf) = \delta \tf(\Phi) = X(\Phi)$.
Then we have
\begin{equation} \begin{split} \label{e:adele}
  \int \delta \tL h \cdot \Phi
  = \int h \cdot \delta(\Phi \circ \tf)
  = \int h \cdot  X(\Phi).
\end{split} \end{equation}
We call the left hand side the operator formula, and the right side the Koopman formula for the perturbation.

Recall that $\Phi$ is compactly supported, then there is no boundary term for integration-by-parts, and we have
\[ \begin{split}
  \int \delta \tL h \cdot \Phi
  = \int h \cdot  X(\Phi)
  = - \int \div(h X) \cdot \Phi.
\end{split} \]
Since this holds for any $\Phi$, it must be $\delta \tL h = - \div(hX)$. 
\end{proof}

\subsection{A pointwise proof}
\label{s:ptwise}
\hfill\vspace{0.1in}

This section derives $\delta \tL$ using the pointwise definitions of $\tL$, which is useful later when we consider perturbations on the conditional measure on unstable manifolds.
Note that $\tL$ is equivalently defined by a pointwise expression,
\begin{equation} \begin{split} \label{e:Lh}
  \tL h(x) := \frac h { |\tf_*| }(y)
  \quad \textnormal{where} \quad 
  y:=\tf^{-1}x.
\end{split} \end{equation}
Here the point $x$ is fixed, 
whereas $y$ and $\tf$ varies according to $\gamma$, so the perturbative map $\tL$ also depends on $\gamma$. 
Here $\tf_*$ and $f_*$ are the pushforward acting on vectors, $\tf_* e := D\tf \, e$.
Later we use $f^*$ to denote the pullback acting on covectors.
$|\tf_*|$ is the Jacobian determinant, or the norm as an operator on $M$-vectors, 
\[ \begin{split}
  |\tf_*| := \frac{|\tf_* e^\cM|}{|e^\cM|},
  \quad \textnormal{where} \quad 
  e^\cM = e_1 \wcw e_M.
\end{split} \]
Here $e_i$'s are smooth 1-vector fields; $e^\cM$ is a smooth $M$-vector field, which is basically an $M$-dimensional hyper-cube field, and $|\cdot|$ is its volume.
Here $\tf_*$ is the Jacobian matrix.
Note that $|\tf_*|$ is independent of the choice of basis, and we expect this independence to hold throughout our derivation.

The volume of $M$-vectors, $|\cdot|$, is a tensor norm induced by the Riemannian metric,
\[ \begin{split}
  |e^\cM|: = \ip{e^\cM,e^\cM}^{0.5}.
\end{split} \]
For two 1-vectors, $\ip{\cdot,\cdot}$ is the typical Riemannian metric.
For simple $M$-vectors,
\[ \begin{split}
  \ip{e^\cM,r} := \det \ip{e_i, r_j},
  \quad \textnormal{where} \quad
  e = e_1\wcw e_M, \;
  r = r_1\wcw r_M, \;
  e_i, r_j\in T\cM.
\end{split} \]
When the operands are summations of simple $M$-vectors, the inner-product is the corresponding sum.

Applying $\delta$ on both sides of \cref{e:Lh}, notice that $h$ is fixed, also that $|\tf_*|=1$ when $\gamma=0$, we have
\begin{equation} \begin{split} \label{e:dL}
  \delta \tL h
  = \delta y (h)
  - h \left. \dd{}{\gamma} (|\tf_*|(y)) \right|_{\gamma=0}.
\end{split} \end{equation}
Here $\delta y=-X$, and we use it to differentiate $h$ in the coordinate variable.
Note that $\dd {}{\gamma}$ is the total derivative: $\tf$ has two direct parameters $y$ and $\gamma$, where $y$ implicitly depends on $\gamma$.
Substituting the following lemma into \cref{e:dL}, we get a pointwise proof of theorem~\ref{t:div}.

\begin{lemma} \label{l:detastar}
  $\left. \dd{}{\gamma} (|\tf_*|(y))\right|_{\gamma=0} = \diverg X $, where $X:=\delta \tf$.
\end{lemma}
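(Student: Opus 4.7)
The plan is to unpack the total derivative on the left-hand side, recognize that one of the two contributions vanishes because $\tf|_{\gamma=0}=\mathrm{id}$, and then compute the remaining piece using the $M$-vector definition of $|\tf_*|$ introduced just above the lemma.

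First, I would write the total derivative via the chain rule:
\[
  \left.\dd{}{\gamma}\bigl(|\tf_*|(y)\bigr)\right|_{\gamma=0}
  = \left.\pp{|\tf_*|}{\gamma}\right|_{\gamma=0}\!(x)
  + d\bigl(|f_*|\bigr)\cdot \delta y,
\]
using $y|_{\gamma=0}=x$. At $\gamma=0$ the map $\tf$ is the identity, so $|\tf_*|\equiv 1$ on $\cM$, hence $d(|f_*|)=0$ identically, and the second term drops out regardless of the value of $\delta y$. This reduces the problem to computing $\delta|\tf_*|$ at the fixed point $x$.

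Second, I would compute $\delta|\tf_*|$ directly from the definition $|\tf_*|=|\tf_* e^\cM|/|e^\cM|$. Differentiating the squared norm and using that $\tf_*e^\cM|_{\gamma=0}=e^\cM$, we get
\[
  \delta |\tf_*|
  = \frac{\ip{\delta(\tf_* e^\cM),\, e^\cM}}{|e^\cM|^2}.
\]
Expanding $\tf_* e^\cM=\tf_* e_1\wcw \tf_* e_M$ by Leibniz and using $\tf_* e_j|_{\gamma=0}=e_j$ leaves one term per factor,
\[
  \delta(\tf_* e^\cM)=\sum_{i=1}^{M} e_1\wedge\cdots\wedge \delta(\tf_* e_i)\wedge\cdots\wedge e_M,
\]
with $\delta(\tf_* e_i)=\nabla_{e_i} X$ by commuting $\delta$ with the pushforward on vectors and using $\delta\tf=X$.

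Third, I would identify the resulting sum as a trace. The ratio $\ip{e_1\wedge\cdots\wedge \nabla_{e_i}X\wedge\cdots\wedge e_M,\,e^\cM}/|e^\cM|^2$ is precisely the $i$-th diagonal entry of the matrix of $\nabla X$ in the frame $\{e_i\}$, so summing over $i$ yields $\tr(\nabla X)=\diverg X$. This finishes the proof.

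The main obstacle, modest as it is, is step two: keeping the $M$-vector algebra coordinate-independent and correctly commuting $\delta$ with $\nabla$/pushforward (so that $\delta(\tf_* e_i)=\nabla_{e_i} X$ is genuinely basis-free and does not pick up Christoffel corrections), since the rest of the paper relies on the same coordinate-independent bookkeeping for the much more delicate equivariant divergence formula in Theorem~\ref{t:vdiv}.
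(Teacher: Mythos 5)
Your proposal is correct and follows essentially the same route as the paper: split the total derivative, kill the second term because $|\tf_*|\equiv 1$ at $\gamma=0$, then Leibniz-expand $\delta|\tf_*|$ on an $M$-vector $e^\cM$ and identify the sum $\sum_i \eps^i\,\delta(\tf_* e_i)$ with $\operatorname{tr}\nabla X$. The one step you flag as the ``main obstacle,'' namely justifying $\delta(\tf_* e_i)=\nabla_{e_i}X$ in a chart-free way, is precisely where the paper does real work: it has earlier declared ``without loss of generality, $\tf$ is the flow of $X$,'' so that $[\tf_* e_i, X]=0$, and then torsion-freeness of the Levi--Civita connection gives $\delta(\tf_* e_i)=\nabla_X(\tf_* e_i)\big|_{\gamma=0}=\nabla_{\tf_* e_i}X\big|_{\gamma=0}=\nabla_{e_i}X$. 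Your worry that ``Christoffel corrections'' might appear is resolved by the same symmetry: in a chart, $\frac{D}{d\gamma}(\tf_* e_i)\big|_{\gamma=0}=DX\cdot e_i+\Gamma(X,e_i)$ and $\nabla_{e_i}X=DX\cdot e_i+\Gamma(e_i,X)$, and these agree because $\Gamma$ is symmetric. So the proposal is sound; it merely defers, rather than supplies, the flow/Lie-bracket (equivalently, torsion-symmetry) argument that the paper spells out.
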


\begin{proof}
  By the chain rule,
  also notice that $y|_{\gamma=0}=x$,
  the total derivative is
  \[ \begin{split}
    \left. \dd{}{\gamma} (|\tf_*|(y))\right|_{\gamma=0} = \delta |\tf_*| (x) +\delta y (\left.|\tf_*|\right|_{\gamma=0}) (x).
  \end{split} \]
  Since $|\tf_*|\equiv 1$ at $\gamma=0$, the second term is zero.
  The first term $\delta |\tf_*|$ is the partial derivative with respect to $\gamma $ while fixing $x$.
  Fix any 
  $M$-vector $e^\cM$
  at $x$, by the Leibniz rule,
  \[ \begin{split}
    \delta |\tf_*| 
    = \frac {\delta \ip{\tf_*e^\cM, \tf_*e^\cM}^{\frac 12}}{ |e^\cM|}
    = \frac 1{2|\tf_*e^\cM| |e^\cM|} \sum_{i=1}^M 2 \ip{\tf_*e_1\wcw\delta \tf_* e_i \wcw \tf_*e_M, \tf_*e^\cM}\\
    = \frac 1{|e^\cM|^2} \sum_{i=1}^M \ip{e_1\wcw\delta \tf_* e_i \wcw e_M, e^\cM}
    = \sum_{i=1}^M \eps^i \delta \tf_* e_i ,
  \end{split} \]
  where $\eps^i$ is the $i$-th covector in the dual basis of $ \{e_i \}_{ i=1 }^M$.

Because $X$ generates the flow $\tf$, we have the Lie bracket $\left[ \tf_*e_i , X\right] =0$. 
Let $\nabla_{(\cdot)}(\cdot)$ denote the Riemannian derivative, then $\nabla_X \tf_*{e_i}|_{\gamma=0}=\nabla_{e_i}X$, and
\[\begin{split}
    \delta\tf_*e_i=\nabla_X \tf_*{e_i}=\nabla_{e_i}X
\end{split}\]
Hence, we see that $\delta |\tf_*|$ is the contraction of $\nabla X$:
this is another definition of the divergence,
which is independent of the choice of the basis $\{e_i \}_{ i=1 }^M$.
\end{proof}

We may as well write the above proof using matrix determinants, which is essentially a more compact set of notations for the outer algebras we used, but is more familiar to some readers.
\footnote{This proof was suggested by a referee during the review process.}
More specifically,
\[ \begin{split}
\left.\frac{d}{d \gamma}\left|\tilde{f}_{*}\right| \circ \tilde{f}^{-1}\right|_{\gamma=0} 
& =\lim_{\gamma \rightarrow 0} \gamma^{-1}\left[\det\left(D_{\tilde{f}^{-1}} \tilde{f}\right)-1\right] \\
& =\lim_{\gamma \rightarrow 0} \gamma^{-1}\left[\det\left( \mathbbm{1} + \gamma D X+\mathcal{O}\left(\gamma^{2}\right)\right)-1\right] \\
& =\lim_{\gamma \rightarrow 0} \gamma^{-1}\left[\det\left(e^{\gamma D X+\mathcal{O}\left(\gamma^{2}\right)}\right)-1\right] \\
& =\lim_{\gamma \rightarrow 0} \gamma^{-1}\left[e^{\gamma \operatorname{Tr} D X+\mathcal{O}\left(\gamma^{2}\right)}-1\right]=\div X
\end{split} \]
The reason the determinant notation is simpler is that, in this section, we do not need the derivative of $e^\cM$: $e^\cM$ can be chosen to be essentially a constant on $\cM$.
Hence, we can use the matrix notation to completely hide away our usage of $e^\cM$.
However, the matrix notation is no longer convenient for the next section, because we will be working on submanifolds, which requires us keeping track of the derivative of the tangent space, which is non-trivial on submanifolds.

\section{Equivariant divergence formula for the unstable perturbation of transfer operator}
\label{s:fastformula}
\hfill\vspace{0.1in}

Many important measures typically live in high dimensions, such as physical measures of chaotic systems.
Efficient handling of such measures requires sampling by an orbit, since it is very expensive to approximate a high-dimensional object by finite elements, for which we give a rough cost estimation in appendix \ref{s:costEst}.
But $\delta \tL$ is not even defined pointwise for typical physical measures.
However, we only need the derivative operator to handle the unstable perturbations; the stable perturbations are typically computed by the Koopman formula on the right of \cref{e:adele}.

In this section, we derive the equivariant divergence formula of the unstable perturbation operator on the unstable manifold.
The formula is defined pointwise, moreover, it is in the form of a few recursive relations on one orbit.
As shown in figure~\ref{f:alau}, we first write the derivative operator as the derivative of the ratio between two volumes.
Then we can obtain an expansion formula, which can be summarized into a recursive formula using the adjoint shadowing lemma.

\subsection{Notations}
\hfill\vspace{0.1in}
\label{s:notations}

We assume that the dynamical system of the $C^3$ diffeomorphism $f$ has a mixing axiom A attractor $K$.
Denote a compact basin of the attractor by $\cV^s(K)$, which is a set containing an open neighborhood of $K$ and such that
\[ \begin{split}
  K= \cap_{n\ge0} f^n(\cV^s(K)).
\end{split} \]
There is a continuous $f_*$-equivariant splitting of the tangent vector space into stable and unstable subspaces, $V^s \bigoplus V^u$,
such that there are constants $C>0$, $0<\lambda < 1$, and
\[
  \max_{x\in K}|f_* ^{-n}|V^u(x)| ,
  |f_* ^{n}|V^s(x)| \le C\lambda ^{n} \quad \textnormal{for  } n\ge 0,
\]
where $f_*$ is the Jacobian matrix.
We still assume that the map $\gamma\mapsto \tf$ is $C^1$ from $\R$ to the family of $C^3$ diffeomorphisms on $\cM$,
and define
$X:= \delta \tf :=\left. \pp{\tf}\gamma \right|_{\gamma=0}$.
Define oblique projection operators $P^u$ and $P^s$, such that
\[ \begin{split}
  X = X^u + X^s, \quad \quad 
  X^u:=P^u X\in V^u, \quad 
  X^s:=P^s X\in V^s.
\end{split} \]
The stable and unstable manifolds, $\cV^s$ and $\cV^u$, are submanifolds tangential to the equivariant subspaces.
Superscripts of manifolds typically denote dimensions, so we also use $u$ and $s$ to denote the unstable and stable dimension, and
\[ \begin{split}
  M = s+u.
\end{split} \]
The physical measure is defined as the weak-* limit of the empirical distribution of a typical orbit.
Under our assumptions, the physical measure is also SRB, which is smooth on the unstable manifold.

We introduce some general notations to be used.
We use subscripts $i$ and $j$ to label directions, and use subscripts $m, n, k$ to label steps. 
Let $\{ e_i \}_{ i=1 }^M\subset T\cM$ be a basis vector field such that the first $u$ vectors satisfy $\span \{e_i\}_{i=1}^u=V^u$, while the other vectors satisfy $\span \{e_i\}_{i=u+1}^M = V^s$; we further require that
\[ \begin{split}
  |e|=1,
  \quad \textnormal{where} \quad 
  e:=e_1\wcw e_u.
\end{split} \]
Let $\{ \eps^i \}_{i=1  }^M$ be the dual basis covector field of $\{ e_i \}_{ i=1 }^M$, that is,
\[ \begin{split}
  \eps^ie_j =
  \begin{cases}
    1, \quad\text{if  } i=j ;
    \\
    0, \quad\text{otherwise.}
  \end{cases}
\end{split} \]
We further require that 
\[ \begin{split}
  \eps(e) = 1,
  \quad \textnormal{where} \quad 
  \eps:=\eps^1 \wcw \eps^u.
\end{split} \]
In other words, $\eps$ removes the stable component, and gives the unstable component of $u$-vectors.

The main results in our paper are coordinate-independent.
Note that $e$ belongs to the one-dimensional space  $\wedge^u V^u$.
So $e$ is the same, up to a coefficient, so long as $e_1\sim e_u$ spans $V^u$.
The case with $\eps$ is similar.
If a formula uses only the normalized $e$ and $\eps$, then it does not depend on the particular choice of $e_i$ and $\eps^i$, and we say this formula is coordinate-independent.
Indeed, most formulas in this paper involve only $e$ and $\eps$ but not individual $e_i$ and $\eps^i$.

We use $\nabla_YX$ to denote the (Riemann) derivative of the vector field $X$ along the direction of $Y$.
$\nabla_{(\cdot)} f_*$, the derivative of the Jacobian, is the Hessian 
\[ \begin{split}
  (\nabla_{Y}f_*)X 
  := \nabla_{f_*Y}(f_*X) - f_*\nabla_YX. 
\end{split} \]
This is essentially the Leibniz rule. Note that $(\nabla_{Y}f_*)X=(\nabla_{X}f_*)Y$.
Denote 
\[ \begin{split}
  \nabla_e X := \sum_{i=1}^u e_1\wcw \nabla_{e_i}X \wcw e_u,
  \quad \textnormal{} \quad 
  \nabla_{X^u} e := \sum_{i=1}^u e_1\wcw \nabla_{X^u}e_i \wcw e_u.
\end{split} \]
When $e$ is the unstable $u$-vector, it is differentiable only in the unstable direction, so in the second equation we differentiate by $X^u\in V^u$.
One of the slots of $\nabla_{(\cdot)} f_*(\cdot)$ can take a $u$-vector, in which case 
\[ \begin{split}
  (\nabla_{X}f_*)e := (\nabla_{e}f_*)X :=
  \sum_{i=1}^u f_*e_1\wcw (\nabla_{e_i}f_*) X \wcw f_* e_u,
  \\
  \nabla_{f_*e}f_*X = (\nabla_{e}f_*)X +  f_*\nabla_eX,
  \quad 
  \nabla_{f_*X}f_*e = (\nabla_{X}f_*)e +  f_*\nabla_Xe.
\end{split} \]

There are two different divergences on an unstable manifold.
The first divergence applies to a vector field within the unstable submanifold,
\[ \begin{split}
  \div^u X^u:= \ip{\nabla_e X^u, e}.
\end{split} \]
We call this the submanifold unstable divergence, or $u$-divergence.
Typically $X^u$ is not differentiable, and $\nabla_{e} X^{u}$ is a distribution rather than a function.
But $\div^u X^u$ is a Holder function: 
because \cref{t:div} shows that $\div^u_\sigma X^u$ is a transfer operator, and \cref{t:vdiv} shows that the transfer operator has an expansion formula, which is Holder.

The second kind of unstable divergence applies to vector fields not necessarily in the unstable manifold; it might be more essential for hyperbolic systems.
The two divergences coincide only if both are applied to a vector field in the unstable subspace and $V^u\perp V^s$.
Define the equivariant unstable divergence, or $v$-divergence, as
\[ \begin{split}
  \div^v X:= \eps \nabla_e X.
\end{split} \]
We define the $v$-divergence of the Jacobian matrix $f_*$,
\[ \begin{split}
  \div^v f_* :=\frac{\eps_1 \nabla_{e} f_* } {|f_* e|},
  \quad 
  (\div^v f_*) X :=\frac{\eps_1 (\nabla_{e} f_*)X } {|f_* e|},
  \quad \textnormal{where} \quad 
  \eps_1(x) := \frac {f^{*-1} (\eps(x))}{|f^{*-1} (\eps(x))|} .
\end{split} \]
By our notation, $\eps_1(x)$ is a covector at $fx$.
Note that $\div^v f_*$ is a Holder continuous covector field on the attractor.
On a given orbit, we denote
\[ \begin{split}
  e_n := \frac{f_*^ne}{|f_*^ne|},
  \quad \textnormal{} \quad 
  \eps_n := \frac{f^{*-n}\eps}{|f^{*-n}\eps|}.
\end{split} \]

Note that $\eps_1(x) = \eps(fx)$ up to an orientation.
It is convenient to assume that on the orbit we pick, the orientations are consistent, that is, in terms of vector fields,
\[ \begin{split}
  e_n =e\circ f^n,
  \quad \textnormal{} \quad 
  \eps_n = \eps\circ f^n.
\end{split} \]
This is typically true in practice, since we shall obtain unstable vectors and covectors by repeatedly pushing-forward or pulling-backward on the given orbit.
However, it is not necessary that we assume this.

\subsection{Expressing transfer operator \texorpdfstring{$\tL^u$}{} on \texorpdfstring{$\cV^u$}{unstable manifold} by holonomy map \texorpdfstring{$\xi$}{}}
\hfill\vspace{0.1in}
\label{s:xi}

We define the unstable perturbation on the unstable manifold as the composition of a perturbation of $\tf$ and a holonomy map $\xi$, which is a projection along stable manifolds.
As illustrated in figure~\ref{f:alau}, fix $x$,
and let $\cV^u(x)$ be the global unstable manifold through $x$ and $\cV_{r}^u(x)$ be the local unstable manifold through $x$ of size $r$ in the ambient manifold; for any $\gamma$, $\cV^{u\gamma}:=\{\tf(z):z\in\cV^u\}$ is a $u$-dimensional manifold.
For any $z\in\cV^{u\gamma}$, denote the stable manifold that goes through it by $\cV^s(z)$.
Define $\xi(z)$ as the unique intersection point of $\cV^s(z)$ and $\cV^u(x)$.
Since $\delta \tf:=\left.\pp{}{\gamma}\right|_{\gamma=0} \tf =X$ is the perturbation and $\xi$ is the projection along stable directions, if we take partial derivative with respect to $\gamma$ while fixing the base point, we have 
\[ \begin{split}
\delta(\xi\tf)
:= \left.\pp{}{\gamma}\right|_{\gamma=0}(\xi\tf)
= X^u.
\end{split} \]
Note the above equation holds only when the equation is restricted to $\cV^u$.

\begin{figure}[ht] \centering
  \includegraphics[width=1.1\textwidth,center]{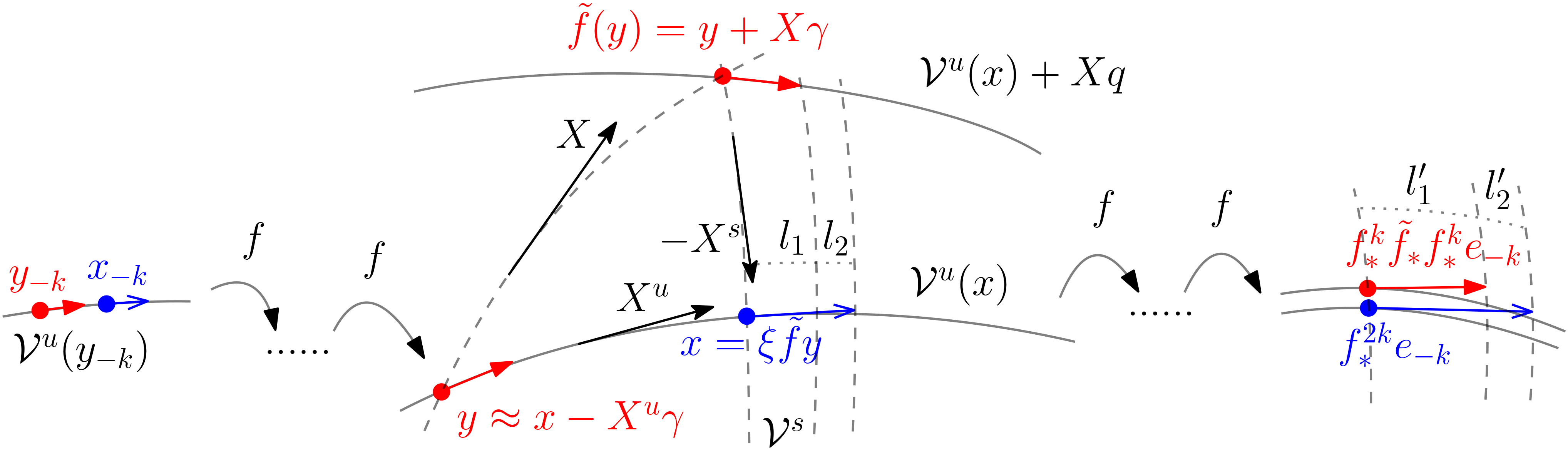}
  \caption{Definitions. 
  Here $y+X\gamma$ means to start from $y$ and flow along the direction of $X$ for a length of $\gamma$.
  Roughly speaking, 
  $\tL^u \sigma/\sigma (x) = (l_2+l_1)/l_1=(l_2'+l_1')/l_1'$,
  where $l_1, l_2$ are lengths of dotted lines.
  }
  \label{f:alau}
\end{figure}

The next lemma shows that for a small range of $\gamma$, $\xi$ and $\xi\tf$ are well-defined on the entire attractor $K$; this then allows us to define locally the transfer operator $\tL^u$.
Among the many technical details below, the main facts to recall are that the unstable manifolds through any points of $K$ lie within $K$, whereas stable manifolds fill a neighborhood of $K$.

\begin{lemma} \label{l:xi}
Given any $r>0$, there is small number $\gamma_0>0$, such that, for all $|\gamma|<\gamma_0$,  for any $y\in K$, the point $x=\xi\tf y$ uniquely exists, and $|x-y|\le 0.1r$.
From now on, we always assume $|\gamma|<\gamma_0$.
\end{lemma}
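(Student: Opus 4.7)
The plan is to combine two uniform inputs — a local product structure of the stable and unstable foliations whose constants are independent of the base point on the compact attractor $K$, and the uniform $C^0$-smallness of $\tilde f$ relative to the identity when $|\gamma|$ is small — and chain these together to obtain existence, uniqueness, and the quantitative bound in one step.

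For the first input, I would invoke the stable manifold theorem on the axiom A set: for each $y\in K$ the local stable and unstable manifolds $\cV^s_\rho(y)$ and $\cV^u_\rho(y)$ are embedded $C^1$ discs whose tangent spaces are transverse at $y$ and depend continuously on the base point. By compactness of $K$ and a finite subcover argument, there exist $\rho^*>0$ and $\eta^*>0$, \emph{independent of $y$}, such that for every $y\in K$ and every $z\in\cM$ with $|z-y|<\eta^*$, the intersection $\cV^s_{\rho^*}(z)\cap \cV^u_{\rho^*}(y)$ consists of exactly one point and that point varies continuously in $z$. Shrinking $\rho^*$ and $\eta^*$ further if necessary, I can arrange that this intersection point is within distance $0.1r$ of $y$.

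For the second input, since $\gamma\mapsto\tilde f$ is $C^1$ into the space of $C^3$ diffeomorphisms on $\cM$ with value the identity at $\gamma=0$, there is a constant $C>0$ (depending on the family and on a fixed compact neighborhood of $K$) with $\sup_{y\in K}|\tilde f y-y|\le C|\gamma|$. Setting $\gamma_0:=\eta^*/C$, for $|\gamma|<\gamma_0$ the point $\tilde f y$ lies inside the uniform product neighborhood of $y$ for every $y\in K$. By construction of $\xi$, the point $x=\xi\tilde f y$ is then the unique intersection $\cV^s_{\rho^*}(\tilde f y)\cap \cV^u_{\rho^*}(y)$, and the preceding paragraph gives $|x-y|\le 0.1r$.

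The only delicate step — and thus the main potential obstacle — is extracting the uniform product radii $\rho^*$ and $\eta^*$ over all of $K$. This follows from compactness of $K$ together with the continuity (in fact Holder continuity) of the invariant foliations on an axiom A attractor, but one must be careful to treat the local unstable manifolds as embedded submanifolds of the ambient $\cM$, so that their intersection with stable leaves through points of $\cM\setminus K$ is meaningful; this is precisely the local product structure available in a full neighborhood of the attractor, and once it is in hand the rest of the lemma reduces to the elementary smoothness estimate $|\tilde f y-y|=O(\gamma)$ in $\gamma$.
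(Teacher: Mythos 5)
Your approach is essentially the same as the paper's: uniform transversality of the invariant foliations via compactness, uniform $C^0$-smallness of $\tilde f-\mathrm{id}$ for small $\gamma$, and intersection of a stable leaf through $\tilde f y$ with the unstable leaf through $y$. You correctly identify the delicate step, but I would push back on the claim that it "follows from compactness of $K$ together with continuity of the invariant foliations": compactness and continuity of the leaves \emph{on $K$} do not by themselves give you a stable leaf of uniform size through the off-attractor point $\tilde f y$. The paper fills exactly this hole by using the fact that a hyperbolic attractor is an isolated (locally maximal) hyperbolic set, so that the compact basin is the union of stable manifolds through points of $K$ — concretely, after shrinking $r$ and replacing the basin by a forward iterate, $\cV^s(K)\subset\bigcup_{z\in K}\cV^s_{0.1r}(z)$ (citing the proof of theorem~4.26 in \cite{Wen2016}). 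Combined with the requirement $\tilde f(K)\subset\cV^s(K)$, this shows $\tilde f y$ sits within $0.1r$ of some $z\in K$ along a stable leaf, so the local stable manifold through $\tilde f y$ has size at least $0.9r$; the transversality angle bound then finishes. Your outline is right, but you should cite or reproduce this isolated-set argument rather than appeal to continuity of the foliations alone.
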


\begin{proof}
Cover the compact set $K$ by a finite number of coordinate charts.
Shrink $r$ if necessary, so that for any $x\in K$, $B(x,r)$ belongs to a finite positive number of charts.
In the following paragraphs of this subsection, the angles and distances are measured in one of the charts (not the Riemannian metric).

More specifically, for all $x\in K$, the local stable manifolds $\cV_{loc}^s(x)$ and local unstable manifolds $\cV_{loc}^u(x)$ depend continuously on $x\in K$.
Since the hyperbolic set $K$ is compact, by further shrinking $r$, the sizes of local stable and unstable manifolds are uniformly larger than $r$.
By further shrinking $r$ again, we can find a positive lower bound for the angles between stable and unstable manifolds,
\begin{equation} \begin{split} \label{e:em}
  0 < \theta 
  := \inf\{\angle(V^u(z_1), V^s(z_2)): z_1\in \cV^u_r(x_1), z_2 \in \cV_r^s(x_2); x_1, x_2\in K\}.
\end{split} \end{equation}

Recall that hyperbolic attractors are isolated hyperbolic sets, so any compact basin $\cV^s(K)$ of the attractor is the union of stable manifolds through points in $K$.
More specifically, by the proof of theorem 4.26 in \cite{Wen2016} (the proof is a bit stronger than the statement of the theorem), we can see that, after further shrinking $r$ and after passing $\cV^s(K)$ to $f^n(\cV^s(K))$ for some large $n$, we have
\begin{equation} \begin{split} \label{e:ding}
\cV^s(K) \subset \cup_{z\in K} \cV^s_{0.1r}(z).
\end{split} \end{equation}

We constrain the size of our perturbation, or equivalently, constrain the range of $\gamma$, so that,
first, $\tf (K)\in \cV^s(K)$, where $\cV^s(K)$ is the basin of the attractor;
second,
\begin{equation} \begin{split} \label{e:alr}
\sup \{|\tf(y)-y|: y\in K\}< 0.1 r \, {\sin\theta}.
\end{split} \end{equation}
Since $\tf(y)\in \cV^s(K)$, by~\cref{e:ding},
there is a stable manifold going through $\tf(y)$, centered at some $z\in K$,
and $\tf(y)-z\le 0.1r$.
Since the stable manifold centered at $z$ has size larger than $r$, the stable manifold centered at $\tf(y)$ has size larger than $0.9r$.
By~\cref{e:alr} and~\cref{e:em}, 
$\cV^s_{0.9r}(\tf(y))$ and $\cV^u_r(y)$ are two transverse manifolds whose centers are close to each other,
so they intersect at a unique point $x=\xi\tf y$, and we have $|x-y|\le 0.1r$.
\end{proof}

We define $\tL^u$ as the local transfer operator of $\xi\tf:\cV^u \rightarrow \cV^u$; note that $\tilde\cdot$ indicates dependence on $\gamma$.
Let $r$ be the uniform size of local unstable manifolds; for each $x\in K$,
let $P:=(\xi\tf)^{-1}\cV^u_{0.1r}(x)$,
we define $\tL^{u}$ as the transfer operator from $C^0(P)$ function space to $C^0(\mathcal{V}_{0.1r}^{u}(x))$.
Since $\delta(\xi \tf) = X^u$, $\delta \tL^u$ is the perturbation by $X^u$.
Let $y = (\xi\tf)^{-1} x$, then the pointwise definition of $\tL^u$ on any density function $\sigma$ is
\[ \begin{split}
  \tL^u\sigma (x)
  := \frac {\sigma} { |\xi_* \tf_{*}| } (y)
  = \frac {\sigma (y)} { |\tf_{*}(y)| \, |\xi_*(\tf y)| }.
\end{split} \]
Here the last expression, roughly speaking, dissects the perturbation by $X^u$ into the perturbation by $X$ and $-X^s$.

Then we define $\frac{\tL^u\sigma}{\sigma} (x)$, where $\sigma$ is the conditional density.
Let $r$ be such that $B(x,r)\cap K$ is folicated by unstable leaves at $\gamma=0$, and $\sigma$ is the density of the conditional measure of the physical measure $\rho$.
In particular, the domain of $\sigma$ includes $\mathcal{V}_{r}^{u}(x)$, and $P:=(\xi\tf)^{-1}\cV^u_{0.1r}(x)\subset \cV^u_r(x) \subset K\cap B(x,r)$, so we can define $\frac{\tL^u\sigma}{\sigma} (x)$ on the smaller leaf $\cV^u_{0.1r}(x)$ for the particular $r$.
Moreover, notice that both $\sigma|_{\cV_{0.1r}^{u}(x)}$ and $\sigma|_P$, the source of $\tL^u \sigma|_{\cV_{0.1r}^{u}(x)}$, are restrictions of the same $\sigma$ from the same larger leaf $\cV_{r}^{u}(x)$ of the same foliation.
Hence, we can expect that, in $\frac{\tL^u\sigma}{\sigma} (x)$, the factor due to the selection of $B(x,r)$ would cancel.
Indeed, as we shall see by the expression in \cref{l:volratio} and \cref{t:vdiv}, $\frac{\tL^u\sigma}{\sigma}(x)$ and $\frac{\delta \tL^u\sigma}{\sigma}(x)$ do not involve $\sigma$ and do not depend on the selection of $B(x,r)$, and $\frac{\delta \tL^u\sigma}{\sigma}$ is a continuous function on $K$.

An additional technical subtlety is that, we want to say $\xi$ is a holonomy map, which is defined only on the hyperbolic set $K$ \cite[section 4.3]{barreirapesin}.
This follows from that the unstable manifold through any $x\in K$ lies in $K$.
As a result, we can use the standard absolute continuity lemma of holonomy maps to resolve the change of the conditional SRB measure on all unstable manifolds.

\subsection{One volume ratio for the entire unstable perturbation operator}
\hfill\vspace{0.1in}
\label{s:onevol}

\begin{lemma} [A volume ratio] \label{l:volratio}
Let $e_{-k}$ be the unit $u$-vector field on $\cV^u(x_{-k})$, where $x_{-k}:=f^{-k}x$.
Denote $y:=(\xi \tf)^{-1}x$ and $y_{-k}:=f^{-k}y$.
Then
\[ \begin{split}
  \frac{\tL^u\sigma}{\sigma} (x) 
  = \lim_{k\rightarrow\infty} \frac{|f_*^{2k} e_{-k}(x_{-k})|}{|f_*^k\tf_*f_*^ke_{-k}(y_{-k})|}
  .
\end{split}\]
\end{lemma}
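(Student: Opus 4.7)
The plan is to begin from the pointwise definition of $\tL^u$ derived in \cref{s:xi}, namely $\tL^u\sigma(x) = \sigma(y)/(|\tf_*(y)|\,|\xi_*(\tf y)|)$ with $y = (\xi\tf)^{-1}x \in \cV^u(x)$. Dividing by $\sigma(x)$ gives
\[\frac{\tL^u\sigma}{\sigma}(x) = \frac{\sigma(y)/\sigma(x)}{|\tf_*(y)|\,|\xi_*(\tf y)|},\]
so the task reduces to realizing both the conditional density ratio and the inverse Jacobian simultaneously as a single orbit-based $u$-volume ratio.

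For the density ratio we invoke the standard SRB cocycle identity for two points on the same unstable leaf,
\[\frac{\sigma(y)}{\sigma(x)} = \lim_{k\to\infty}\frac{|f_*^k e_{-k}(x_{-k})|}{|f_*^k e_{-k}(y_{-k})|},\]
whose right-hand side telescopes into the product of one-step unstable Jacobian ratios $|f_* e(x_{-n})|/|f_* e(y_{-n})|$ along the backward orbits of $x$ and $y$. Because $e_{-k}(x_{-k})\in\wedge^u V^u(x_{-k})$ and $f_*$ preserves $\wedge^u V^u$, the pushforwards are parallel to $e(x)$ and $e(y)$, so we may write $f_*^k e_{-k}(x_{-k}) = \alpha_k\,e(x)$ and $f_*^k e_{-k}(y_{-k}) = \beta_k\,e(y)$ with $\alpha_k/\beta_k \to \sigma(y)/\sigma(x)$. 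Applying $f_*^k$ once more to the first vector and $f_*^k\tf_*$ to the second splits the claimed ratio as
\[\frac{|f_*^{2k} e_{-k}(x_{-k})|}{|f_*^k\tf_* f_*^k e_{-k}(y_{-k})|} = \frac{\alpha_k}{\beta_k}\cdot\frac{|f_*^k e(x)|}{|f_*^k\tf_* e(y)|},\]
reducing the lemma to showing that the second factor tends to $1/(|\tf_*(y)|\,|\xi_*(\tf y)|)$.

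To close the argument we compare the $u$-vector $\tf_* e(y) \in T_{\tf y}\cV^{u\gamma}$ with its stable holonomy image $\xi_*\tf_* e(y)$, which lies in $\wedge^u V^u(x)$ and is therefore parallel to $e(x)$ with length exactly $|\tf_*(y)|\,|\xi_*(\tf y)|$. Hence $|f_*^k\xi_*\tf_* e(y)| = |\tf_*(y)|\,|\xi_*(\tf y)|\cdot|f_*^k e(x)|$, and it suffices to show
\[\lim_{k\to\infty}\frac{|f_*^k\tf_* e(y)|}{|f_*^k\xi_*\tf_* e(y)|}=1.\]
This is the main technical obstacle. The key is the equivariance $f^k\circ\xi = \xi^{(k)}\circ f^k$, where $\xi^{(k)}$ denotes the stable holonomy between the unstable-like leaves $f^k\cV^{u\gamma}$ and $\cV^u(x_k)$; differentiating yields $f_*^k\xi_*v = \xi^{(k)}_* f_*^k v$ for any $u$-vector $v$. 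Because $\tf y$ and $x$ lie on a common stable manifold, the base points $f^k\tf y$ and $x_k$ converge exponentially, and by the standard absolute continuity formula for the stable foliation \cite{barreirapesin} the $u$-volume Jacobian of $\xi^{(k)}$ is the tail of the convergent infinite product that defines $|\xi_*(\tf y)|$ and therefore tends to $1$. Applying this with $v = \tf_* e(y)$ gives the required limit and closes the proof.
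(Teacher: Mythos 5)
Your proof is correct and uses the same two essential ingredients as the paper: the SRB conditional density cocycle $\sigma(y)/\sigma(x) = \lim_k |f_*^k e_{-k}(x_{-k})|/|f_*^k e_{-k}(y_{-k})|$, and the absolute continuity of the stable holonomy, expressed as $\lim_k |f_*^k \tf_* e(y)|/|f_*^k \xi_* \tf_* e(y)| = 1$. The only difference is bookkeeping: the paper starts from the pointwise expression for $\tL^u\sigma/\sigma$, substitutes the telescoping holonomy identity, and regroups terms to reach the claimed ratio, whereas you start from the claimed ratio, factor it as $(\alpha_k/\beta_k)\cdot|f_*^k e(x)|/|f_*^k\tf_* e(y)|$ using that $f_*^k$ preserves the one-dimensional line $\wedge^u V^u$, and then identify the two factors with $\sigma(y)/\sigma(x)$ and $1/(|\tf_*(y)||\xi_*(\tf y)|)$ respectively — a somewhat cleaner forward decomposition that dispenses with the explicit substitution-and-cancellation step. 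Your concluding paragraph on the holonomy limit via the equivariance $f^k\circ\xi = \xi^{(k)}\circ f^k$ is sound and makes the mechanism behind the paper's citation of the absolute-continuity theorem a little more explicit, though it remains a sketch at the same level of rigor as the paper's intuitive discussion following equation \eqref{e:holo}.
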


\begin{proof}
First, we find an expression for $\sigma$ by considering how the Lebesgue measure on $\cV^u(x_{-k})$ is evolved.
The mass contained in the cube $e_{-k}$ is preserved via pushforwards, but the volume increased to $f_*^ke_{-k}$.
Hence, 
for $y\in \cV^u(x)$, the density $\sigma$ satisfies
\[ \begin{split}
\frac{\sigma(x)}{\sigma(y)} 
=
\lim _{k \rightarrow \infty} 
\frac
{\left|f_{*}^{k} e_{-k}(f^{-k} y) \right|}
{\left|f_{*}^{k} e_{-k}(f^{-k} x) \right|}.
\end{split} \]
This expression was stated for example in \cite[proposition 1]{Ruelle_diff_maps_erratum} using unstable Jacobians;
note that the conditional measure is determined up to a constant coefficient.
Hence,
\[ \begin{split}
  \frac{\tL^u\sigma}{\sigma} (x) 
  =  \lim_{k\rightarrow\infty}  \frac{|f_*^ke_{-k}(x_{-k})|}{|f_*^ke_{-k}(y_{-k})| \, |\tf_{*}(y)| \, |\xi_*(\tf y)|}
  .
\end{split} \]
Here $|\tf_{*}(y)| := \frac{|\tf_* e(y)|}{|e(y)|}$, 
$|\xi_{*}(\tf y)| := \frac{|\xi_*\tf_* e(y)|}{|\tf_*e(y)|}$,
where $\tf_* e(y)$ is a vector at $\tf y$, $\xi_*\tf_* e(y)$ is a vector at $x$.

By a corollary of the absolute continuity of the holonomy map \cite[theorem 4.4.1]{barreirapesin} (we provide an intuition for this corollary after this proof),
\begin{equation} \begin{split} \label{e:holo}
\frac {1} {|\xi_*(\tf y)|} 
:= \frac {|\tf_* e(y)|} {|\xi_*\tf_* e(y)|} 
= \prod_{n=0}^\infty 
\frac {|f_*^{n}\tf_* e(y)|} { |f_*^{n+1} \tf_* e(y)|}
\frac {|f^{n+1}_* \xi_*\tf_* e(y) |} {|f_*^{n}\xi_* \tf_* e(y)|} 
\\
= \lim_{k\rightarrow\infty}
\frac {|\tf_* e(y)|} { |f_*^{k} \tf_* e(y)|}
\frac {|f^{k}_* \xi_*\tf_* e(y) |} {|\xi_* \tf_* e(y)|} .
\end{split} \end{equation}
By substitution and cancellation,
\[ \begin{split}
  \frac{\tL^u\sigma}{\sigma} (x) 
  =  \lim_{k\rightarrow\infty}
  \frac{|f_*^ke_{-k}(x_{-k})|}{|f_*^ke_{-k}(y_{-k})| } 
  \frac{|e(y)|}{|f_*^k \tf_*e(y)|} 
  \frac{|f_*^{k}\xi_*\tf_* e(y)|}{|\xi_*\tf_* e(y)|} 
\end{split} \]

Both $f_*^k e_{-k}(x_{-k})$ and $\xi_*\tf_*e(y)$ are in the one-dimensional subspace $\wedge^u V^u(x)$, so the growth rate of their volumes are the same when pushing forward by $f_*$, hence
\[ \begin{split}
  \frac {|f^{k}_* \xi_*\tf_* e(y) |} {|\xi_* \tf_* e(y)|}
  = \frac {|f^{2k}_* e_{-k}(x_{-k}) |} {| f_*^k e_{-k}(x_{-k})|}.
\end{split} \]
Similarly, 
\[ \begin{split}
  \frac{|e(y)|}{|f_*^k \tf_*e(y)|} 
  = \frac{|f_*^k e_{-k}(y_{-k})|}{|f_*^{k} \tf_* f_*^{k} e_{-k}(y_{-k})|}.
\end{split} \]
Finally, by substitution and cancellation,
\[ \begin{split}
  \frac{\tL^u\sigma}{\sigma} (x) 
  =  \lim_{k\rightarrow\infty}
  \frac{|f_*^{2k} e_{-k}(x_{-k})|}{|f_*^k \tf_*f_*^k e_{-k}(y_{-k})|} .
\end{split} \]
\end{proof}

We give an intuitive explanation of \cref{e:holo}.
Because $\xi$ is projection along the stable manifolds, intuitively, for any $z$ near $K$, such as $z = \tf y$, we have
\[ \begin{split}
\lim _{k \rightarrow \infty} d(f^{k} \xi z , f^{k} z) =0 ,
\end{split} \]
where $d$ is the distance on $\cM$.
For any vector $e'$ at $z$ transverse to $V^s(z)$, such as $e'=\tf_* e(y) $, vaguely speaking, the vector $e'$ and $\xi_* e'$ collapse after pushforward many times, so we have
\[ \begin{split}
\lim _{k \rightarrow \infty} 
\frac {|f^{k}_* \xi_* e' |} {|f_*^{k} e'|} = 1.
\end{split} \]
This equation is equivalent to \cref{e:holo}, after cancellation from both sides of \cref{e:holo}.

\begin{lemma} [Expanded equivariant divergence formula] \label{l:expand}
\[ \begin{split}
  - \frac{\delta \tL^u\sigma}{\sigma} 
  = 
  \div^v X
  - \sum_{m=1}^{\infty} (\div^vf_*)_{-m} f_*^{-m} X^u
  + \sum_{n=0}^{\infty} (\div^vf_*)_{n} f_*^{n} X^s.
\end{split} \]
If we evaluate this formula at $x$, then here $X^s$ is a vector at $x$, $f_*^{n} X^s$ is a vector at $f^{n}x$, $(\div^vf_*)_{n}(x)=(\div^vf_*)(x_{n})$.
\end{lemma}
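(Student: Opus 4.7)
The plan is to start from \cref{l:volratio} and differentiate at $\gamma=0$. A cleaner quantity than $W_k = |f_*^k\tf_*f_*^k e_{-k}(y_{-k})|$ is its dual pairing $\tilde W_k := \eps(f^k\tf y)\cdot f_*^k\tf_*f_*^k e_{-k}(y_{-k})$. Since the $W_k$-vector becomes asymptotically parallel to $e(f^k\tf y)$ under the final $k$ forward iterations of $f$ — its stable components decaying like $\lambda^k$ — we have $|W_k|/\tilde W_k \to 1$, so $-\delta\tL^u\sigma/\sigma = \lim_k \delta\log\tilde W_k\big|_{\gamma=0}$. The identity $f^*\eps = |f_*e|\cdot\eps$ on $\wedge^u T\cM$, which follows from $\eps\cdot e = 1$ and the $f_*$-invariance of $V^u$, factorizes $\tilde W_k$ into a telescoping product
\[
\tilde W_k \;=\; \prod_{j=-k}^{-1}|f_*e(f^j y)|\cdot J_\tf(y)\cdot\prod_{j=0}^{k-1}|f_*e(f^j\tf y)|,
\]
where $J_\tf(y):=\eps(\tf y)\cdot\tf_*e(y)$ is the ``unstable Jacobian'' of $\tf$.

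I would then differentiate each factor at $\gamma=0$, using the base-point derivatives $\delta(f^j y) = -f_*^j X^u$ for $j\le 0$ and $\delta(f^j\tf y) = f_*^j X^s$ for $j\ge 0$. Each log-expansion factor contributes a directional derivative $Y\log|f_*e|$ at $x_j$ along the appropriate shift $Y$; I identify this with $(\div^v f_*)(x_j)\cdot Y$ via the Leibniz rule $\nabla_{f_*Y}(f_*e) = (\nabla_Y f_*)e + f_*\nabla_Y e$ together with the identity $\eps_1 = \eps\circ f$ for the dual covector. The middle factor $\delta J_\tf$ expands by the product rule as $\div^v X - \eps\cdot\nabla_X e$, where the correction $-\eps\cdot\nabla_X e$ arises because both $\eps$ and $e$ vary as the base point $y$ (and $\tf y$) moves with $\gamma$.

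The key technical step is to show that the $\nabla e$-type correction terms cancel pairwise across the three pieces, using the constancy identity $\nabla_Y(\eps\cdot e) = 0$ and the Leibniz correction $f_*\nabla_Y e$ that appears in the identification of $Y\log|f_*e|$ with $(\div^v f_*)Y$. After this cancellation, reindexing $m=-j$ on the backward side yields
\[
-\frac{\delta\tL^u\sigma}{\sigma} \;=\; \div^v X - \sum_{m=1}^{\infty}(\div^v f_*)_{-m}f_*^{-m}X^u + \sum_{n=0}^{\infty}(\div^v f_*)_{n}f_*^{n}X^s.
\]
Absolute convergence of both series, and the exchange of $\lim_k$ with $\delta$, follow from uniform hyperbolic decay of $f_*|_{V^s}$ and $f_*^{-1}|_{V^u}$ at rate $\lambda$ combined with the H\"older regularity of $\div^v f_*$ on the attractor. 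The main obstacle will be organizing the correction-term cancellations cleanly: they involve derivatives of both $\eps$ and $e$ along the orbit and must be matched between the middle $J_\tf$ piece and the directional derivatives inside the two sums, which is subtle because the Riemannian inner product implicit in $|f_*e|$ and the dual pairing in $(\div^v f_*)$ only agree on $\wedge^u V^u$.
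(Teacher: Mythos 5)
Your plan is genuinely different from the paper's, and the difference is exactly where the trouble enters. The paper differentiates $\dd{}{\gamma}\bigl(f_*^k\tf_*f_*^ke_{-k}(y_{-k})\bigr)$ under a single inner product with $f_*^{2k}e_{-k}$, so that every base-point derivative produced by the Leibniz rule lands on $f_*$ or $X$ (both at least $C^2$), never on $e$ or $\eps$; the lone $\nabla e$ term ($f_*^{2k}\nabla_{-f_*^{-k}X^u}e_{-k}$) vanishes in the limit, and the covector $\eps_{n+1}$ is not introduced by hand but emerges from Theorem~\ref{l:alice}. Your telescoping $\tilde W_k=\prod_{j<0}|f_*e(f^jy)|\cdot J_{\tf}(y)\cdot\prod_{j\ge 0}|f_*e(f^j\tf y)|$ instead plants the unstable Jacobian $|f_*e|$ at every intermediate site, and then you must compute $\delta\log|f_*e(f^j\tf y)|$ for $j\ge 0$, a directional derivative of $\log|f_*e|$ along $\delta(f^j\tf y)=f_*^jX^s\in V^s$. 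On a hyperbolic attractor $|f_*e|$, like $e$ and $\eps$, is only H\"older transversally to unstable leaves, so this derivative is not pointwise defined. Worse, for $\gamma\neq 0$ the points $f^j\tf y$ leave $K$ (since $\tf$ does not preserve $K$); off $K$ neither $\eps$ nor $V^u$ is intrinsically defined, and under any extension the $f_*$-equivariance of $V^u$ fails, which breaks the identity $f^*\eps_1=|f_*e|\,\eps$ that your telescoping relies on. So the factorization does not actually hold pointwise for $\gamma\neq 0$, and the factor-by-factor $\delta$ is ill-posed.

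You also correctly observe that $Y\log|f_*e|$ and $(\div^v f_*)Y$ differ by $\nabla_Y e$-type corrections (and by the discrepancy between $\eps_1$ and $\ip{\cdot,e_1}$ off $\wedge^uV^u$), and that these must cancel against the $-\eps\cdot\nabla_X e$ piece of $\delta J_\tf$; but you flag this cancellation as ``the main obstacle'' and do not perform it. It is indeed the crux: the correction at site $j$ involves $\nabla_{Y_j}e(x_j)$ while that at $j+1$ involves $\nabla_{Y_{j+1}}e(x_{j+1})$ with $Y_{j+1}=f_*Y_j$, and there is no evident pairwise telescoping between these; moreover $\nabla_X e$ itself contains $\nabla_{X^s}e$, which is again a stable-direction derivative of a H\"older object. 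The paper's derivation is engineered precisely so that no such terms ever appear. Unless you can rigorously tame these corrections — most likely by reverting to the paper's single-inner-product formulation, or by proving a potential-function cancellation lemma in the spirit of Gou\"ezel--Liverani — the argument as written does not close.
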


\begin{proof}
Formally differentiate the expression in \cref{l:volratio},
\[ \begin{split}
  \frac{\delta \tL^u\sigma}{\sigma} (x) 
  =  \lim_{k\rightarrow\infty} - \frac{|f_*^{2k} e_{-k}(x_{-k})|}
  {|f_*^k\tf_*f_*^ke_{-k}(y_{-k})|^2} 
  \left.\dd{}{\gamma}|f_*^k\tf_*f_*^ke_{-k}(y_{-k})| \right|_{\gamma=0} \\
\end{split} \]
At $\gamma=0$, we have $x=y$, $\tf$ is identity, so
\[ \begin{split}
  - \frac{\delta \tL^u\sigma}{\sigma} (x) 
  = \lim_{k\rightarrow\infty} \frac 1 {|f_*^{2k} e_{-k}(x_{-k})|}
  \left.\dd{}{\gamma} |f_*^k\tf_*f_*^ke_{-k}(y_{-k})|\right|_{\gamma=0}
\end{split} \]
Here
\[ \begin{split}
\left.\dd{}{\gamma} |f_*^k\tf_*f_*^ke_{-k}(y_{-k})|\right|_{\gamma=0}
  =\left. \dd{}{\gamma} \ip{f_*^k\tf_*f_*^ke_{-k}(y_{-k}),f_*^k\tf_*f_*^ke_{-k}(y_{-k})}^{\frac 12}\right|_{\gamma=0}
  \\
  = \frac12 \ip{f_*^{2k} e_{-k},f_*^{2k} e_{-k}}^{-\frac 12} 2 \ip{ \left.\dd{}{\gamma} f_*^k\tf_*f_*^ke_{-k}(y_{-k})\right|_{\gamma=0},f_*^{2k} e_{-k}}
  \\
  = \frac 1 {|f_*^{2k} e_{-k}|}
  \ip{\left.\dd{}{\gamma} f_*^k\tf_*f_*^ke_{-k}(y_{-k})\right|_{\gamma=0},f_*^{2k} e_{-k} }.
\end{split} \]
Here the second equality demands that $\dd {}{\gamma}$, applied on vectors, is the Riemannian derivative.
Moreover, we emphasize that $\dd {}{\gamma}$ is also the total derivative:
$\tf$ has two direct parameters $y$ and $\gamma$; $f$ has only one variable $y$, and $y$ depends on $\gamma$.
Summarizing, we have,
\[ \begin{split}
  - \frac{\delta \tL^u\sigma}{\sigma} (x) 
  = \lim_{k\rightarrow\infty} \frac
  {\ip{ \left.\dd{}{\gamma} f_*^k\tf_*f_*^ke_{-k}(y_{-k})\right|_{\gamma=0},f_*^{2k} e_{-k} }}
  {|f_*^{2k} e_{-k}|^2},\\
  \quad \textnormal{where} \quad 
  \dd{}{\gamma}  f_*^k\tf_*f_*^ke_{-k} = \sum_{i=1}^u  f_*^k\tf_*f_*^ke_{-k,1}\wcw \dd{}{\gamma}  f_*^k\tf_*f_*^ke_{-k,i} \wcw  f_*^k\tf_*f_*^ke_{-k,u}.
\end{split} \]

Recursively apply the Leibniz rule, note that $\tf_*=I_d$ when $\gamma=0$, we get
\[ \begin{split}
  \dd{}{\gamma} f_*^k\tf_*f_*^ke_{-k}(y_{-k})
  = 
  f_*^{k}(\dd{}{\gamma} \tf_*) f_*^{k} e_{-k}
  +
  f_*^{2k} \nabla_{-f_*^{-k}X_u}e_{-k}\\
  + 
  \sum_{n=0}^{k-1}
  f_*^{2k-n-1}(\nabla_{-f_*^{n-k} X^u} f_*) f_*^{n} e_{-k}
  + 
  f_*^{k-n-1}(\nabla_{f_*^n X^s} f_*) f_*^{n+k} e_{-k}.
\end{split} \]
Then we substitute into the previous equation to get
\begin{equation} \begin{split} \label{e:lanli}
  - \frac{\delta \tL^u\sigma}{\sigma} (x) 
  = \lim_{k\rightarrow\infty} 
  \frac{\ip{f_*^{k} \left(\left.\dd{}{\gamma} \tf_*\right|_{\gamma=0}\right) f_*^{k} e_{-k},f_*^{2k} e_{-k} }}
  {|f_*^{2k} e_{-k}|^2}
  +
  \frac{\ip{f_*^{2k} \nabla_{-f_*^{-k}X^u}e_{-k},f_*^{2k} e_{-k} }}
  {|f_*^{2k} e_{-k}|^2}
  \\
  + 
  \sum_{n=0}^{k-1}
  \frac{\ip{f_*^{2k-n-1}(\nabla_{-f_*^{n-k} X^u} f_*) f_*^{n} e_{-k},f_*^{2k} e_{-k} }}
  {|f_*^{2k} e_{-k}|^2}
  + 
  \frac{\ip{f_*^{k-n-1}(\nabla_{f_*^n X^s} f_*) f_*^{n+k} e_{-k},f_*^{2k} e_{-k} }}
  {|f_*^{2k} e_{-k}|^2} 
  .
\end{split} \end{equation}
The convergence as $k\rightarrow\infty$ is uniform for a small range of  $|\gamma|$, justifying the formal differentiation. 
Then we shall simplify each term in \cref{e:lanli} to prove the lemma.

The second term on the right of this equation is zero, since 
\[ \begin{split}
  \lim_{k\rightarrow\infty} f_*^{-k} X^u = 0.
\end{split} \]

Then we consider the first term in~\cref{e:lanli}.
\[\begin{split}
    \left.\left( \dd{}{\gamma}\tf \right)e\right|_{\gamma=0}
    :=\left.\dd{}{\gamma}(\tf_*e)-\tf_*\dd{}{\gamma}e\right|_{\gamma=0}\\
    =\left.\nabla_{\pp{}{\gamma}}(\tf_*e)\right|_{\gamma=0}
    +\nabla_{\delta y}\left(\left.\tf_*e\right|_{\gamma=0}\right)
    -\nabla_{\delta y}e
\end{split}\]
Since $\tf_*e|_{\gamma=0}=e$, we have $\nabla_{\delta y}(\tf_* e|_{\gamma=0} )=\nabla_{\delta y}e$, so the last two terms cancel each other.
Since $\tf$ is the flow of $X$, we can use the same Lie bracket statement as in \cref{l:detastar}, to get $\nabla_{\pp{}{\gamma}}(\tf_*e)=\nabla_{\tf_*e}(X)=\nabla_{e} X$ at $\gamma=0$. 
Hence,
\[ \begin{split}
\left( \left. \dd{}{\gamma} \tf_* \right|_{\gamma=0} \right) e
= \nabla_{e} X .
\end{split} \]
Then we show where $\div^vX$ in the lemma comes from. Roughly speaking, $e$ grows faster than all the other $u$-vectors, so after pushing-forward many times, $e$ becomes dominant: this is proved in theorem~\ref{l:alice} in the appendix.
Hence,
\[ \begin{split}
  \lim_{k\rightarrow\infty} 
  \frac{\ip{f_*^{k}\left(\left.\dd{}{\gamma} \tf_*\right|_{\gamma=0}\right) f_*^{k} e_{-k},f_*^{2k} e_{-k} }}
  {|f_*^{2k} e_{-k}|^2}
  =
  \lim_{k\rightarrow\infty} 
  \frac{\ip{f_*^{k}\left(\left.\dd{}{\gamma} \tf_*\right|_{\gamma=0}\right) e,f_*^{k}e }}
  {|f_*^{k} e|^2}
  \\=
  \lim_{k\rightarrow\infty} 
  \frac{\ip{f_*^{k}\nabla_e X, e_k }}
  {|f_*^{k} e|}
  =
  \eps \nabla_eX 
  =:
  \div^vX.
\end{split} \]
This is the first term in the right hand side of the lemma.

Then we consider the terms in the second last sum of~\cref{e:lanli}.
Roughly speaking, for large $k$
\[ \begin{split}
\frac{\ip{f_*^{2k-n-1}(\nabla_{-f_*^{n-k} X^u} f_*) f_*^{n} e_{-k},f_*^{2k} e_{-k} }} {|f_*^{2k} e_{-k}|^2}
\\
\approx
\frac{\eps_{n-k+1} (\nabla_{-f_*^{n-k} X^u} f_*) e_{n-k}} 
{|f_* e_{n-k}|}
= 
- \left(\frac{\eps_1 \nabla_{e} f_* } {|f_* e|}\right)_{n-k} f_*^{n-k} X^u \,.
\end{split} \]
To make this rigorous, use theorem~\ref{l:alice} in the appendix, which estimates the error of the above approximation, and we see that
\[ \begin{split}
\sum_{n=0}^{k-1}
\left|\frac{\ip{f_*^{2k-n-1}(\nabla_{-f_*^{n-k} X^u} f_*) f_*^{n} e_{-k},f_*^{2k} e_{-k} }} {|f_*^{2k} e_{-k}|^2}
- \frac{\eps_{n-k+1} (\nabla_{-f_*^{n-k} X^u} f_*) e_{n-k}} 
{|f_* e_{n-k}|}
\right|\\
=\sum_{n=0}^{k-1}\left| \frac{\ip{f_*^{2k-n-1}(\nabla_{-f_*^{n-k} X^u} f_*) e_{n-k},f_*^{2k-n-1} e_{n-k+1} }}
{|f_*^{2k-n-1} e_{n-k+1}|^2 |f_* e_{n-k}|}
- \eps_{n-k+1} \frac{(\nabla_{-f_*^{n-k} X^u} f_*) e_{n-k}} {|f_* e_{n-k}|}
\right|\\
\le C u \sum_{n=0}^{k-1} \lambda^{2(2k-n-1)} 
\left| \frac{(\nabla_{-f_*^{n-k} X^u} f_*) e_{n-k}} {|f_* e_{n-k}|} \right|
\le C u \sum_{n=0}^{k-1} \lambda^{2(2k-n-1)} \left|f_*^{n-k} X^u \right| \\
\le C u \sum_{n=0}^{k-1} \lambda^{2(2k-n-1)+(k-n)} \left| X \right|
\le C \lambda^{2k} |X|
.
\end{split} \]
Here the $C$'s are different in each appearance, and the last $C$ does not depend on $X$ and $k$.
Also note that the first equality employs the choice of orientation of $V^u$ on the orbit, which is implied by our notation, that is, 
\[ \begin{split}
  \frac{f_* e_{n-k}} { |f_* e_{n-k}|} = e_{n-k+1},
\end{split} \]
and it does not matter whether $e_{n}=e\circ f^{n}$ or $e_{n}=-e\circ f^{n}$ in terms of the vector fields.
Hence, the second last sum of~\cref{e:lanli} converges uniformly and absolutely, and the limit
\[ \begin{split}
\lim_{k\rightarrow\infty} 
\sum_{n=0}^{k-1}
\frac{\ip{f_*^{2k-n-1}(\nabla_{-f_*^{n-k} X^u} f_*) f_*^{n} e_{-k},f_*^{2k} e_{-k} }} {|f_*^{2k} e_{-k}|^2}
= 
\lim_{k\rightarrow\infty} 
\sum_{n=0}^{k-1}
- \left(\frac{\eps_1 \nabla_{e} f_* } {|f_* e|}\right)_{n-k} f_*^{n-k} X^u \\
= 
\lim_{k\rightarrow\infty} 
\sum_{m=-1}^{-k}
- \left(\frac{\eps_1 \nabla_{e} f_* } {|f_* e|}\right)_{m} f_*^{m} X^u 
= 
\sum_{m\le -1}
- \left(\div^v f_*\right)_{m} f_*^{m} X^u \,.
\end{split} \]
This is the first sum in the lemma.

Similarly, for the last sum in~\cref{e:lanli},
\[ \begin{split}
\sum_{n=0}^{k-1}
  \left|
  \frac{\ip{f_*^{k-n-1}(\nabla_{f_*^n X^s} f_*) f_*^{n+k} e_{-k},f_*^{2k} e_{-k} }} {|f_*^{2k} e_{-k}|^2} 
  - 
  \frac{\eps_{n+1} (\nabla_{f_*^{n} X^s} f_*) e_{n}} {|f_* e_{n}|}
  \right|
  \\ =
  \sum_{n=0}^{k-1}
  \left|
  \frac{\ip{f_*^{k-n-1}(\nabla_{f_*^n X^s} f_*) e_{n}, f_*^{k-n-1} e_{n+1} }} {|f_*^{k-n-1} e_{n+1}|^2 |f_* e_{n}| } 
  - 
  \eps_{n+1} \frac{ (\nabla_{f_*^{n} X^s} f_*) e_{n}} {|f_* e_{n}|}
  \right|
  \\ \le 
  C u \sum_{n=0}^{k-1} \lambda^{2(k-n-1)} 
  \left| \frac{(\nabla_{f_*^{n} X^s} f_*) e_{n}} {|f_* e_{n}|} \right|
  \le C u \sum_{n=0}^{k-1} \lambda^{2(k-n-1)} 
  \left| f_*^{n} X^s \right|
  \\ \le 
  C u \sum_{n=0}^{k-1} \lambda^{2(k-n-1)+n} \left| X \right|
  \le C \lambda^{k} |X|.
\end{split} \]
The convergence is absolute and uniform.
Hence, 
\[ \begin{split}
  \lim_{k\rightarrow\infty} 
  \sum_{n=0}^{k-1}
  \frac{\ip{f_*^{k-n-1}(\nabla_{f_*^n X^s} f_*) f_*^{n+k} e_{-k},f_*^{2k} e_{-k} }} {|f_*^{2k} e_{-k}|^2} 
  \\ =
  \lim_{k\rightarrow\infty} 
  \sum_{n=0}^{k-1}
  \frac{\eps_{n+1} (\nabla_{f_*^{n} X^s} f_*) e_{n}} {|f_* e_{n}|}
  =
  \sum_{n=0}^{\infty}
  (\div^v f_*)_n f_*^{n} X^s
\end{split} \]
This is the last sum in the lemma.
Summarizing, we have simplified each term in~\cref{e:lanli} and thus proved the lemma.
\end{proof}

\subsection{Recursive formula}
\hfill\vspace{0.1in}
\label{s:recurs}

The adjoint shadowing operator on covectors is equivalently defined by three characterizations \cite{Ni_asl}:
\begin{enumerate}
\item
  $\cS$ is the linear operator $\cS:\cX^{*\alpha}(K) \rightarrow \cX^{*\alpha} (K)$, such that
  \[ \begin{split}
    \rho(\om S(X)) = \rho(\cS(\om) X)
    \quad \textnormal{for any } X\in\cX^\alpha(K).
  \end{split} \]
Here $ \cX^{\alpha}(K)$ and $ \cX^{*\alpha}(K)$ denote the space of Holder-continuous vector and covector fields on $K$.
$S$ is the (forward) shadowing operator, that is, $v=S(X)$ is the only bounded solution of the variational equation $v\circ f = f_* v + X$.
  
\item
  $\cS(\om)$ has the expansion formula given by a `split-propagate' scheme,
  \begin{equation*}
    \cS ( \om ) := \sum_{n\ge 0} f^{*n} \cP^s \om_n
    -  \sum_{n\le -1}  f^{*n} \cP^u \om_n\,.
  \end{equation*}
\item 
  The shadowing covector $\nu=\cS(\om)$ is the unique bounded solution of the inhomogeneous adjoint equation,
  \[ \begin{split}
  \nu = f^* \nu_1 + \om, 
  \quad \textnormal{where} \quad \nu_1:=\nu \circ f.
  \end{split} \]
\end{enumerate}
Here $f^*$ is the pullback operator on covector, which is dual of $f_*$.
Here $\cP^s, \cP^u$, and $f^*$ are transposed matrices, or adjoint operators, of $P^s, P^u$, and $f_*$.
More specifically, define the adjoint projection operators, $\cP^u$ and $\cP^s$,
such that for any $w\in T_{x}\cM$, $\eta \in T^*_{x}\cM$,
\[ \begin{split}
  \eta (P^u w) = \cP^u \eta(w),
  \quad \textnormal{} \quad 
  \eta (P^s w) = \cP^s \eta(w).
\end{split} \]
We can show that $\cP^s, \cP^u$ in fact project to unstable and stable subspaces for the adjoint system.
This is the adjoint theory of the conventional shadowing lemma \cite{Bowen_shadowing,Wen2020}.
Then we can prove

\mianyang*

\begin{remark*}
(1) Note that the middle expression is the same for two local density function $\sigma$'s which are different only by a constant multiplier; the rightmost expression does not explicitly involve $\sigma$.
(2) This theorem can be proved via another approach, via the fast formula in \cite{fr}, as given in appendix~\ref{a:douyin}, but that proof is longer and less intuitive.
\end{remark*}

\begin{proof}
The first equality is due to the distributional definition of $\div^u_\sigma X^u$.

Since the unstable and stable subspaces are invariant, we have
\[ \begin{split}
  f_*^k P^u = P^uf_*^k 
  \,, \quad \textnormal{} \quad 
  f_*^k P^s = P^sf_*^k
  \,.
\end{split} \]
Hence, 
By lemma~\ref{l:expand}
\[ \begin{split}
  - \frac{\delta \tL^u\sigma}{\sigma} 
  = \div^v X
  - \sum_{m=1}^{\infty} (\div^vf_*)_{-m} f_*^{-m} P^uX
  + \sum_{n=0}^{\infty} (\div^vf_*)_{n} f_*^{n} P^sX \\
  = \div^v X
  - \sum_{m=1}^{\infty} (\div^vf_*)_{-m}P^u f_*^{-m} X
  + \sum_{n=0}^{\infty} (\div^vf_*)_{n}P^s f_*^{n} X.
\end{split} \]
By the definition of adjoint projection operators,
\[ \begin{split}
  - \frac{\delta \tL^u\sigma}{\sigma} 
  = \div^v X
  - \sum_{m=1}^{\infty} \cP^u(\div^vf_*)_{-m} f_*^{-m} X
  + \sum_{n=0}^{\infty} \cP^s(\div^vf_*)_{n} f_*^{n} X.
\end{split} \]
By the definition of the pullback operator $f^*$,
\[ \begin{split}
  - \frac{\delta \tL^u\sigma}{\sigma} 
  = \div^v X
  + \left(\sum_{n=0}^{\infty} f^{*n}\cP^s(\div^vf_*)_{n}
  - \sum_{m=1}^{\infty} f^{*-m}\cP^u(\div^vf_*)_{-m}  \right) X.
\end{split} \]
By the expansion formula of the adjoint shadowing operator, we prove the lemma.
\end{proof}

The significance of the formula in \cref{t:vdiv} is that it can be sampled by an orbit, just as how the physical measure is defined.
More specifically, this means two things
\begin{itemize}
  \item The formula is defined pointwise.
  \item The formula can be computed by recursively applying a map on a few vectors and covectors.
\end{itemize}

The formula is defined pointwise: all differentiations hit only $X$ and $f_*$:
these vector or tensor fields are at least $C^2$ on $\cM$.
Also, all intermediate quantities have bounded sup norm.
There are previous works achieving pointwise formula for the unstable part of linear responses \cite{Ruelle_diff_maps_erratum,Gouezel2008}.
However, it was not clear back then that those formulas were related to unstable transfer operators, and the formulas are not recursive.

More importantly, our formula can be sampled on an orbit via only $2u+1$ many recursive relations.
The numerical implementations, including several none-trivial tricks such as renormalizations and matrix notations, and several numerical examples are given in another paper \cite{far}.
It is inobvious, if not impossible, that previous pointwise formulas, even with extra work, can be realized by this many recursive relations.
First, $e$ can be efficiently computed via $u$-many forward recursion.
Since unstable vectors grow while stable vectors decay, we can pushforward almost any set of $u$ vectors, and their span will converge to $V^u$, while their normalized wedge product converges to $e$.
Note that this convergence is measured by the metric on the Grassmannian of $u$-dimensional subspaces.
Numerically, we need to perform occasional renormalizations when pushing forward the set of $u$ many single vectors; renormalization does not change the span, but avoids the clustering of single vectors.
This classical result is used in the algorithms for Lyapunov vectors by Ginelli and Benettin \cite{Ginelli2007_CLV,Ginelli2013_CLV,Benettin1980_LE}, although here we only need the unstable subspace instead of individual unstable Lyapunov vectors.
Similarly, $\eps$ can be efficiently computed via pulling-back $u$-many covectors, since it is the unstable subspace of the adjoint system.

The adjoint shadowing form $\nu:=\cS(\div^vf_*)$ can also be efficiently computed with one more backward recursion and an orthogonal condition at the first step.
The second characterization in the adjoint shadowing lemma states that $\nu$ is the only bounded solution of the inhomogeneous adjoint equation,
\[ \begin{split}
  \nu_n = f^*\nu_{n+1} + (\div^vf_*)_{n}.
\end{split} \]
Hence $\nu$ can be well approximated by solving the following equations for $a_1,\ldots, a_u \in\R$,
\[ \begin{split} 
  \nu  =  \nu ' + \sum_{i=1}^u \eps^i a_i \,,
  \quad  \mbox{s.t. }
  \ip{\nu_0, \eps^i_0}=0
  \quad \textnormal{for all} \quad 
  1\le i\le u.
\end{split}\]
Here $\nu'$ is a particular inhomogeneous adjoint solution.
Intuitively, the unstable modes are removed by the orthogonal projection at the first step, where the unstable adjoint modes are the most significant.
This is known as the nonintrusive (adjoint) shadowing algorithm \cite{Ni_nilsas} (also see \cite{Blonigan_2017_adjoint_NILSS,Ni_NILSS_JCP,Ruesha}).

Hence, we can compute the $v$-divergence formula on a sample orbit, with sampling error $E\sim O(1/\sqrt T)$, and the cost is
\begin{equation} \begin{split} \label{e:hua}
  S\sim O(uT) \sim O(u E^{-2}).
\end{split} \end{equation}
In particular, this is not cursed by dimensionality.
Compared with the zeroth-order finite-element method for the whole $\delta L$, whose cost is estimated in appendix~\ref{s:costEst}, the efficiency advantage is significant when the dimension is larger than 4.
The numerical implementations of our formula takes seconds to run on an $M=21$ system, which is almost out of reach for finite-element methods \cite{far}.

\section{Sampling linear responses by an orbit}
\label{s:divu}

This section uses our equivariant divergence formula to sample linear responses recursively on an orbit, which is the derivative of the physical measure with respect to the parameter of the system.
We do \textit{not} reprove linear responses, rather, the focus is to sample it by recursively applying a map to evolve vectors.
We first review the two linear response formula of physical measures.
Then we explain how to blend the two linear response formulas for physical measures.
In particular, the unstable part is given by the unstable perturbation of the unstable transfer operator, which can be sampled by an orbit, according to our \cref{t:vdiv}.

\subsection{Two formulas for linear response and their formal derivations}
\hfill\vspace{0.1in}
\label{s:two_versions}

The application that we are interested in is the linear response of physical measures.
In fact, most cases where we favor the derivative of transfer operators are when the perturbation is evolved for a long-time, as in the case of linear responses. 
Otherwise, if we are interested in the perturbation for only a few steps, we may as well use the Koopman formula, the left side of \cref{e:adele}, which can be evaluated much more easily than derivative of transfer operators.
This subsection reviews the linear response and its two more well-known formulas, the ensemble formula and the operator formula.
The arguments in subsection are all formal, and the purpose is to help readers review those formulas, whose rigorous proof is much more difficult than our presentation.

We stil use $f$ to denote the fixed base diffeomorphism, $\tf$ the perturbation, and $\xi$ the projection along stable foliations.
At $\gamma=0$, $\tf$ is identity. 
In this subsection, we assume that $\tf\circ f$ is still hyperbolic; this can be achieved by further shrinking the range of $\gamma$ prescribed in \cref{l:xi}.
Let $\th$ and $h$ denote the `density' of the physical measure $\tilde \rho$ and $\rho$.
Note that $\th$ is \textit{not} single-step perturbed densities, which is denoted by $\tL h$ instead.
More specifically,
\[ \begin{split}
  \th := \lim_{n\rightarrow\infty} (\tL L)^n \mu,
  \quad \textnormal{} \quad 
  h := \lim_{n\rightarrow\infty} L^n \mu,
\end{split} \]
where $\mu$ is any smooth density function of a measure supported on the basin of the attractor.
The convergence is in the weak-* sense; that is, for any $C^2$ function $\Phi$ over $\cM$,
\[ \begin{split}
  \int \Phi \th = \lim_{n\rightarrow\infty} \int \Phi\circ (\tf f)^n \mu.
\end{split} \]
The physical measure encodes the long-time-average statistics, and it has regularities in the unstable directions for axiom A systems.
A perturbation $\tf$ gives a new physical measure, and their linear relation was discussed by the pioneering works \cite{Gallavotti1996,Llave86,Jiang06,Bonetto06}, then justified rigorously for hyperbolic systems.
There are other attempts to compute the linear response which do not need ergodic theory, such as the gradient clipping and the reservoir computing method from machine learning \cite{clip_gradients2,Luca2022}.

One way to derive the linear response formula is to average the perturbation of individual orbits over the physical measure.
More specifically, for a smooth observable function $\Phi$,
\[ \begin{split}
  \int \Phi \th = \lim_{n\rightarrow\infty} \int \Phi\circ (\tf f)^n \mu.
\end{split} \]
Apply the chain rule recursively,
\[ \begin{split}
   \delta (\tf f)^n
  =  \sum_{m=0}^{n-1} f_*^m \delta \tf \circ f^{n-m} 
\end{split} \]
To intuitively explain the chain rule, consider an orbit starting from $x$, running for $n$ steps.
Then we add a perturbation $\tf$ to each step, and above expression gives how $x_n$ is perturbed.
Hence, the integrand can be expressed by the perturbation of an orbit, and 
\[ \begin{split}
   \delta(\Phi\circ (\tf f)^n) 
   = d\Phi \circ f^{n} \cdot \sum_{m=0}^{n-1} f_*^m  \delta \tf \circ f^{n-m}
  =  \sum_{m=0}^{n-1} \delta \tf(\Phi\circ f^m ) \circ f^{n-m} 
\end{split} \]
Here the last equality is the definition of pushforward of vectors.
We may also directly obtain the above formula from a bigger chain rule including $\Phi$, but here we take a detour to explain the relation to the perturbation of orbits.
Hence, the linear response is
\[ \begin{split}
  \delta \left(\int \Phi \th \right) 
  = \lim_{n\rightarrow\infty} \int \delta(\Phi\circ (\tf f)^n) \mu 
  = \lim_{n\rightarrow\infty} \sum_{m=0}^{n-1} \int \delta \tf(\Phi\circ f^m ) \circ f^{n-m} \mu \\
  = \lim_{n\rightarrow\infty} \sum_{m=0}^{n-1} \int X (\Phi\circ f^m ) \circ f^{n-m} \mu 
  = \sum_{m=0}^{\infty} \int X(\Phi\circ f^m ) h
  = \sum_{m=0}^{\infty}  \int f_*^m X(\Phi) h.
\end{split} \]
We call this the ensemble formula for the linear response,
because it is formally an average of orbit-wise perturbations over an ensemble of orbits.

For contracting maps, the ensemble formula converges, and we we only need one orbit to sample each attractor and its perturbation.
For hyperbolic systems, above formula was proved in \cite{Ruelle_diff_maps,Dolgopyat2004}.
This formula was numerically realized in \cite{Lea2000,eyink2004ruelle,lucarini_linear_response_climate,lucarini_linear_response_climate2}.
However, due to exponential growth of the integrand, 
it is typically unaffordable for ensemble methods to actually converge.
This issue is sometimes known as the `gradient explosion'.

The dual way to derive the linear response formula is to differentiate 
\[ \begin{split}
  \th = \lim_{n\rightarrow\infty} (\tL L)^{n}  \mu,
\end{split} \]
where $\mu$ is the density of a measure absolute continuous with respect to Lebesgue and supported on the attractor basin.
Since the transfer operator is, very vaguely speaking, a giant matrix, so we can formally apply the Leibniz rule at $\gamma=0$,
\[ \begin{split}
  \delta \th = \lim_{n\rightarrow\infty}  \sum_{m=0}^{n-1} L^{m} \delta \tL \, L^{n-m} \mu
  = \sum_{m=0}^{\infty} L^{m}  \delta \tL \lim_{n\rightarrow\infty} L^{n-m} \mu
  =\sum_{m=0}^\infty L^{m} \delta \tL h,
\end{split} \]
where we formally interchanged the limit and the differentiation, 
and used the fact that that $\lim_{n\rightarrow\infty} L^{n-m} \mu = h$.
Hence, we get the so-called operator formula for the linear response,
\[ \begin{split}
  \delta \int \Phi \th 
  =\int \Phi \delta \th
  =\sum_{m=0}^\infty \int \Phi L^m \delta \tL h
  =\sum_{m=0}^\infty \int \Phi \circ f^m \delta \tL h
  =\sum_{m=0}^\infty \rho (\Phi \circ f^m \frac {\delta \tL h} h) .
\end{split} \]
Here $\rho$ is the physical measure of $f$.
The convergence is due to decay of correlations, 
since formal integration-by-parts shows that,
\[ \begin{split}
  \rho\left(\frac {\delta \tL h} h \right)
  = \rho\left(\mathbbm{1} \frac {\delta \tL h} h \right)
  = \rho\left( X(\mathbbm{1}) \right)
  = 0.
\end{split} \]
Here $\mathbbm{1}$ is a constant function.
Since this term has zero mean, the decay of correlation could be faster than normal \cite{Gouezel04,Korepanov2016}.

Above formal arguments can be directly proved for expanding map $f$ with $C^3$ regularity, for example for tent maps on circles or cat maps on torus \cite{Baladi2007,Galatolo2023}.
Because, roughly speaking, expanding maps make the densities smoother, so $h$ and $\th$ are well-defined functions, and $\delta\tL h$ is also a function.
For expanding maps, it is not hard to imagine that we can sample $\frac {\delta \tL h} h$ on an orbit recursively; to do this, just linearly remove the stable part from our equivariant divergence formula.

For the case of hyperbolic sets, $h$ and $\delta\tL h$ are no longer functions.
They belong to the so-called anisotropic Banach space \cite{Liverani1995,Dolgopyat1998,Baladi1999,Gouezel04,Collet2004}.
However, the objects in the anisotropic Banach space are not pointwisely defined functions, so there is no hope to compute $\frac {\delta \tL h} h$ on an orbit.
Hence, to keep using the operator formula, we have to use finite elements to to generate a mollified approximation of the singular objects.
As shown in appendix~\ref{s:costEst}, the cost of the finite element method is exponential to $M$, which is too high for typical physical systems.
This issue is known as `curse by dimensionality'.

Finally, the operator formula and the ensemble formula are formally equivalent under integration-by parts.
From the ensemble formula,
\[ \begin{split}
  \sum_{m=0}^{\infty}  \int f_*^m X(\Phi) h
  = \sum_{m=0}^{\infty}  \int  X(\Phi\circ f^m) h
  = - \sum_{m=0}^{\infty} \int \Phi \circ f^m (\diverg_{h} X) h.
\end{split} \]
By theorem~\ref{t:div}, we have
\[ \begin{split}
  \sum_{m=0}^{\infty} \int \Phi \circ f^m (\diverg_{h} X) h
  = - \sum_{m=0}^{\infty} \int \Phi L^m \delta \tL h
  = - \int \Phi \delta \th.
\end{split} \]

To summarize, both the ensemble formula and the operator formula give the true derivative for hyperbolic systems, which have both expanding and contracting directions.
However, in high dimensions, we want to sample by orbits, and the ensemble formula is still suitable mainly for contracting systems, whereas the operator formula is suitable mainly for expanding systems.

\subsection{Blending two linear response formulas}
\hfill\vspace{0.1in}
\label{s:blend}


It is a natural idea to combine the two linear response formulas.
That is, to look at the orbit change for the stable or shadowing part of the linear response, and the density change for the unstable part.
This subsection derives this blended formula for the linear response, which can be sampled by an orbit.

We call such linear response formulas the blended formulas, such as the formula in \cite[proposition 8.1]{Gouezel2008}.
Another blended formula, which has a numerical implementation, is the one used by the blended response algorithm \cite{Abramov2008}.
But the blended response algorithm computed the unstable divergence by summing directional derivatives, which is not defined pointwise.
With our formula for the unstable perturbations of transfer operators, we can sample the unstable divergence and hence the entire linear response by an orbit.
\Cref{t:far} is part of the so-called fast (adjoint) response algorithm. It is numerically demonstrated on a 21 dimensional example with 20 unstable dimensions \cite{far,fr}.

In this subsection we shall assume linear responses proved by previous literature.
Let $\rho$ and $\tilde \rho$ denote the SRB measure of $f$ and $\tf\circ f$ supported on an axiom A attractor.
The map $\gamma\mapsto \tf$ is $C^1$ from $\R$ to the family of $C^3$ diffeomorphisms on $\cM$,
Let $\Phi:\cM\rightarrow \R$ be a $C^2$ observable function.
The linear response has the expression \cite{Ruelle_diff_maps,Dolgopyat2004,Jiang06}
\[ \begin{split}
  \delta \tilde \rho(\Phi) 
  = \sum_{n\ge0} \rho(f^{n}_{*}  X_{-n} (\Phi)).
\end{split} \]
Here $X_{-n}(x) = X(x_{-n})$, $x$ is the dummy variable in the integration, $f^{n}_{*} X_{-n}$ is a vector at $x$.
We do \textit{not} give a new proof of this formula, rather, we shall give a new formula which can be sampled by $2u$ recursive relations on an orbit.
The proof of our new formula starts from the above formula.
We define the shadowing and unstable contribution of the linear response as:
\[ \begin{split}
  S.C. 
  := \sum_{n\ge0} \rho(f^{n}_{*} X^s_{-n} (\Phi))
  - \sum_{n\le-1} \rho(f^{n}_{*} X^u_{-n} (\Phi)) ,
  \\
  U.C 
  := \sum_{n\in \Z } \rho(f^{n}_{*} X^u_{-n} (\Phi)) .
\end{split} \]

\lanren*

\begin{proof}
Recall that we have
\begin{equation*}
  \cS ( \om ) := \sum_{n\ge 0} f^{*n} \cP^s \om_n
  -  \sum_{n\le -1}  f^{*n} \cP^u \om_n\,
\end{equation*}
so we can calculate that
 \[\begin{split}
     \rho (\cS(d\Phi) X)=\rho((\sum_{n\geq 0}f^{*n}\cP^sd\Phi_n-\sum_{n\leq -1}f^{*n}\cP^ud\Phi_n)X) 
     \\
     =\rho(\sum_{n\geq 0}d\Phi_n(f^n_*P^sX)-\sum_{n\leq -1}d\Phi_n(f^n_*P^uX)).
\end{split}\]
Notice that $\rho$ is the invariant measure, so we have
\[\begin{split}
    \rho(d\Phi_n(f^n_*P^sX))=\rho(f^n_*X^s_{-n}(\Phi)) \\
     \rho(d\Phi_n(f^n_*P^uX))=\rho(f^n_*X^u_{-n}(\Phi))
\end{split}\]
Take it back to the formula, then we have
$$S.C.=\rho (\cS(d\Phi) X)$$

For the unstable contribution, take a Markov partition of the attractor so that each rectangle is foliated by unstable leaves. 
Let $\sigma'$ denote the factor measure (also called the quotient measure), and $\sigma$ denote the conditional density of $\rho$ with respect to this foliation. Notice that the boundary of the Markov partition has zero physical measure, we take the following integral on the rectangles. 
\[\begin{split}
    \rho(f^n_*X^u_{-n}(\Phi))=\iint f^n_*X^u_{-n}\cdot\grad\Phi\sigma dxd\sigma'(x)
    =\iint \sigma X^u\cdot\grad(\Phi\circ f^n)dxd\sigma'(x).
\end{split}\]
Integrate-by-parts on unstable manifolds, note that the flux terms on the boundary of the partition cancel (this technique was used for example in \cite{Ruelle_diff_maps_erratum}), so 
\[\begin{split}
    \rho(f^n_*X^u_{-n}(\Phi))
    =\iint -\div_\sigma^uX^u(\Phi\circ f^n)\sigma dxd\sigma'(x)
    =\rho(\Phi\circ f^n\frac{\delta\tL^u\sigma}{\sigma}).
\end{split}\]
Take it back to the formula and we can finish the proof.
\end{proof}

\subsection{A formal but intuitive derivation of our blended formula}
\hfill\vspace{0.1in}
\label{s:axx}

The proof of the fast adjoint response formula in the previous subsection feels like taking a detour.  
It is based on linear response formulas proved by other people, whose proofs typically require moving to another space such as the sequence space, or anisotropic Banach space.
This section formally derives the unstable contribution in \cref{t:far} by transfer operators.
This approach is more direct, since we proved \cref{t:vdiv} by transfer operators.
Moreover, we only need to work within $\cM$ in this formal argument.

In this subsection we \textit{assume} that linear response exists and is truly linear, whose proof should still requires using advanced spaces.
More specifically, denote $\delta \rho (\Phi, X)$ as the linear response corresponding to a perturbation $\delta\tf=X$, then we assume that we already know
\[ \begin{split}
  \delta \tilde \rho (\Phi, X) 
  = \delta \tilde \rho (\Phi, X^s) 
  + \delta \tilde \rho (\Phi, X^u)  .
\end{split} \]

Define the `stable contribution' of the linear response as the linear response caused by the perturbation $X^s$.
Since stable vectors decay exponentially via pushforward, the argument in the first half of \cref{s:two_versions} still applies, so
\[ \begin{split}
  \delta \tilde \rho (\Phi, X^s)  = \sum_{m=0}^{\infty}  \rho (f_*^m X^s(\Phi))   .
\end{split} \]
This formula can be naturally sampled by an orbit.

The rest of this subsection derives the unstable contribution of the total linear response, which is the linear response caused by perturbing $\tilde \rho$ by $X^u$.
Recall that the physical measure is the weak-* limit of pushing forward a Lebesgue measure.
Intuitively, since the stable direction contracts the measures, the densities of the measure asymptotically approach the unstable manifolds, and eventually, the physical measure is carried by the unstable manifolds.
The unstable perturbation by $X^u=\delta (\xi\tf)$ will re-distribute the densities within each unstable manifold, but will not move densities across different unstable manifolds.
Hence, we can think of the dynamical system as time-inhomogeneous, hopping from one unstable manifold to another: this model is purely expanding.
In this model, the phase space is a family of unstable manifolds, which is preserved under the unstable perturbation. 
Hence, roughly speaking, the transfer operator version of linear response formula still applies.

More specifically, first, we fix a Markov partition for $f$, denote the local foliation of local unstable manifolds by $F^u$, and let $\sigma'$ be the quotient measure (not density) of the physical measure $\rho$ in the stable direction.
We also fix $F^u$, which is not changed by the unstable perturbation.
Since the unstable perturbation does not change the attractor, we can further fix $\sigma'$ for the rest of this subsection, even though $\sigma'$ is no longer the quotient measure of $\xi\tf f$ under $F^u$ when $\gamma\neq0$.
We shall designate $\delta \tilde \rho$ to the change of $\sigma$, the density of the conditional measure, which is no longer a conditional probability when $\gamma\neq0$.


After we fixed the quotient measure, i.e. the factor measure $\sigma'$ on leaves, then we can define a conditional density $s$ for each signed Radon measure $\mu$ which is absolutely continuous to the product measure $Leb\times \sigma'$ on each rectangle, where $Leb$ denote the Lebesgue measure on each leaf. Conversely, if we have a positive function $s$ which is integrable with respect to $Leb\times \sigma'$ we can get a Radon measure $\mu$.
We denote $x$ as a position on the manifold, and $y$ as the unstable leaf which contains $x$. So actually $y=y(x)$ depends on $x$, and we also set $d\sigma'(x)=d\sigma'(y(x))$ under our notions.

Let $L^u s$ be the conditional density of the pushforward measure $f_*\mu$ and $J^u_f(x)$ be the Jacobian of $f$ restricted to the unstable manifolds. 
Then we have
\begin{equation} \begin{split} \label{e:ww}
  L^u s(fx)J^u_f(x)dxd\sigma'(fx)
  = s(x)dxd\sigma'(x) .
\end{split} \end{equation} 
holds under the sense of integral, i.e. for any $A$ with positive $Leb\times \sigma'$ measure, take integral on $A$ for both side, this equation holds.
To see this, let $A$ be any set with positive $Leb\times \sigma'$ measure;  without loss of generality, we can let $A$ small, so that $A$ and $fA$ are each contained in a rectangle.
Then by changing variables,
\[ \begin{split}
    \int_{y\in F^u}\int_{x\in \cV^u(y)\bigcap A} s(x)dxd\sigma'(y)=\mu(A)
  \\ 
  = f_*\mu(fA) 
  = \int_{y\in F^u}\int_{x\in \cV^u(y)\bigcap A}L^u s(fx)d(fx)d\sigma'(fy).
  \\
  = \int_{y\in F^u}\int_{x\in \cV^u(y)\bigcap A}L^u s(fx)J^u_f(x)dxd\sigma'(fy).
\end{split} \]
Hence \cref{e:ww} holds.

On the set where $s(x)$ is positive, $\frac{L^u s(fx)}{s(x)}dxd\sigma'(fx)
= \frac{1}{J^u_f(x)}dxd\sigma'(x) $ holds under the sense of integral. 
As a result, if $s_1$ is everywhere positive (even if $s_2$ is not always non-zero), then we have\begin{equation} \begin{split} \label{e:zongz}
  \frac{L^u s_2(fx)}{L^u s_1(fx)}
  =
  \frac{s_2(x)}{s_1(x)}
\end{split} \end{equation} 
holds for $Leb\times\sigma'\ a.e.\   x\in K$.
In particular, it holds almost everywhere for the physical measure $\rho$.
We mention that all the point-wise equations later in this section should be understood in the sense of $Leb\times\sigma'\ a.e.\   x\in K$.

Let $\sigma_0$ be a conditional density of a measure independent of $\gamma$. 
Denote $\sigma_{n+1}:=\tL^uL^u\sigma_n$, where $L^u$ is the renormalized transfer operator of $f$ defined above.
Let $\tL^u$ be the transfer operator of $\xi\tf$. 
All equations below are still evaluated at $\gamma=0$.
First, by the chain rule, 
\[ \begin{split}
\delta \sigma_{n+1}
=\delta\tL^uL^u\sigma_n+\tL^uL^u\delta\sigma_n
=\delta\tL^u\sigma_{n+1}+L^u\delta\sigma_n,
\end{split} \]
where the last equality uses that $\tL^uL^u=L^u$ at $\gamma=0$.
Divide both sides by $\sigma_{n+1}=L^u\sigma_n>0$, to get
\[ \begin{split}
\frac{\delta\sigma_{n+1}}{\sigma_{n+1}}
= \frac{\delta\tL^u\sigma_{n+1}}{\sigma_{n+1}}
+\frac{L^u\delta\sigma_n}{L^u\sigma_n}.
\end{split} \]
Substitute $s_2=\delta\sigma_n$ and $s_1=\sigma_n$ into \cref{e:zongz}, we get
\[ \begin{split}
\frac{L^u\delta\sigma_n(x)}{L^u\sigma_n(x)}
=\frac{\delta\sigma_n(f^{-1}x)}{\sigma_n(f^{-1}x)}
\end{split} \]
By substitution, we have 
\[ \begin{split}
\frac{\delta\sigma_{n+1}}{\sigma_{n+1}}(x)=\frac{\delta\tL^u\sigma_{n+1}}{\sigma_{n+1}}(x)+\frac{\delta\sigma_n}{\sigma_n}(f^{-1}x).
\end{split} \]
Apply this equation repeatedly, notice that $\delta\sigma_{0}=0$, we get 
\[ \begin{split}
\frac{\delta\sigma_n}{\sigma_n}(x)=\sum_{k=1}^{n}\frac{\delta\tL^u\sigma_k}{\sigma_k}(f^{k-n}x).
\end{split} \]
By the same argument in the proof of \cref{t:vdiv}, we can see that the right side of the formula is independent of the choice of the Markov partition and is continuous across the boundaries of the sets in the partition.

Now, let $\rho$ be the physical measure of $f$.
Let $\rho_n=(\tL L)^n \rho$ where $\tL$ is the transfer operator of $\xi\tf$. 
We set $\sigma_n$ and $\sigma$ as the density of the conditional measures of $\rho_n$ and $\rho$ under $F^u$ and $\sigma'$. 
By definition, $\sigma_{n+1}=\tL^uL^u\sigma_n$, so we have
\[ \begin{split}
\delta\rho_n(\Phi)=\delta\int d\sigma'\int\sigma_n|_{\gamma=0}\frac{\sigma_n}{\sigma_n|_{\gamma=0}}\Phi
=\int d\sigma'\int\sigma_n|_{\gamma=0}\frac{\delta\sigma_n}{\sigma_n|_{\gamma=0}}\Phi
\\
=\rho_n\left(\frac{\delta\sigma_n}{\sigma_n}\Phi\right)
=\rho_n\left(\sum_{k=1}^{n}\frac{\delta\tL^u\sigma_k}{\sigma_k}(f^{k-n}x)\Phi\right).
\end{split} \]
Notice that at $\gamma=0$, we have $\sigma_k = \sigma$, and $\rho$ is $f$-invariant.
Let $m=n-k$, 
\[ \begin{split}
\delta\rho_n(\Phi) 
=
\sum_{k=1}^{n} \rho\left(\frac{\delta\tL^u\sigma}{\sigma} \circ f^{k-n} \, \Phi\right)
=
\sum_{m=0}^{n-1} \rho\left(\Phi\circ f^m\frac{\delta\tL^u\sigma}{\sigma} \right).
\end{split} \]
Finally, let $n\rightarrow\infty$, we formally get  
\[ \begin{split}
\delta \tilde \rho (\Phi, X^u)
= \sum_{m\geq 0}\rho\left(\Phi\circ f^m\frac{\delta\tL^u\sigma}{\sigma} \right).
\end{split} \]
This is equivalent to the expression in \cref{t:far}, except for that here we are using the stable/unstable decomposition instead of the shadowing/unstable decomposition of the linear response.

\section{Conclusions}

The phase space is typically high dimensional, so efficient computations demand sampling by an orbit rather than approximating high-dimensional measures by finite-elements.
In this paper we solve this problem for the more difficult part, the unstable perturbation operator of a physical measure.
It was well known that the physical measure can be sampled by an orbit; now, with our results, we know that the unstable derivative of the transfer operator and hence the linear response can also be sampled on a orbit by $2u$ recursive relations. 
This cost is perhaps optimal, since we need at least $u$ many modes to capture all the unstable perturbative behaviors of a chaotic system.

\section*{Acknowledgements}

Both authors thank Yi Shi for helpful discussions.
Angxiu Ni very grateful to Stefano Galatolo, Wael Bahsoun, Gary Froyland, and Caroline Wormell for very helpful discussions.
Angxiu Ni is partially supported by the China Postdoctoral Science Foundation 2021TQ0016 and the International Postdoctoral Exchange Fellowship Program YJ20210018.
This work is partially done during Angxiu Ni's postdoctoral period at Peking University and during his visit to Mark Pollicott at the University of Warwick.

\section*{Data Availability Statements}

This manuscript has no associated data.
The paper detailing the algorithm for \cref{t:vdiv} and \cref{t:far} is \cite{far}, and the code is at \url{https://github.com/niangxiu/far}.

\begin{appendix}

\section{A very rough cost estimation of finite-element method for approximating high-dimensional measures}
\hfill\vspace{0.1in}
\label{s:costEst}

When the measure is singular, $\delta \tL h$ has infinite sup norm.
Although this is a well-defined mathematical objects in suitable Banach spaces, computers can not process infinite sup norm.
Currently, the main numerical practice for computing derivative operators is to first approximate the measure by finite-elements  \cite{Keane1998,Froyland2007,Liverani2001,Ding1994,Pollicott2000,Galatolo2014,Galatolo2014a,Wormell2019,Crimmins2020,Antown2021,Wormell2019a,Froyland2013a,Zhang2020,Pollicott2016}, then compute the derivative operator.
Here the finite-elements are in the vague sense, which includes both finite-element, finite-difference, kernel method, Markov partition, Markov approximation.
This approximation approach allows us to ignore the singularities and subtle structures of measures; however, the cost is still affected.

Computing the entire derivative operator was not numerically realized for discrete-time systems with dimensions larger than 1: for example, Bahsoun, Galatolo, Nisoli, and Niu did computations on 1-dimensional expanding maps \cite{Bahsoun2018}.
For continuous-time systems, Gutiérrez and Lucarini numerically computed the derivative operator for a continuous-time 3-dimensional system \cite{SantosGutierrez2020},
There are two difficulties, the easier one is the lack of convenient formulas, which we solved via theorem~\ref{t:div}.
The more essential difficulty is that the finite-element method is cursed by the dimension of the dynamical system.

Galatolo and Nisoli gave a rigorous posterior error bound for the finite-element method, where some quantities in the bound are designated to be computed by numerical simulations \cite{Galatolo2014a}.
That bound, though precise, does not give the cost-error relation and how it depends on dimensions.

This section gives an a priori cost-error estimation on a simple singular measure approximated by zero-order isotropic finite elements.
A more general and precise estimation is more difficult, but should not change the qualitative conclusion.
That is, the cost is `cursed by dimensionality', or it increases exponentially fast with respect to the dimension of the attractor.
For physical or engineering systems, this cost is too high.

Consider the example where the singular measure is uniformly distributed on the $a$-dimensional attractor, $\{0\}^{M-a}\times \T^a$, where $\T:=[-0.5,0.5]$.
We use the zeroth order finite-elements in the $M$-dimensional cubes of length $b$ on each side.
The density $h$ is a distribution; and we still formally denote the SRB measure as the integration of $h$ with respect to the Lebesgue measure.
Let $h' $ be the finite-element approximation of $h$, so
\[ \begin{split}
  h' (x) = \begin{cases}
  b^{-(M-a)} \quad \textnormal{for} \quad 
  |x^1|, \ldots, |x^{M-a}| \le b/2; \;
  |x^{M-a+1}|, \ldots, |x^{M}| \le 0.5; \\
  0 \quad \textnormal{otherwise}.
\end{cases} \end{split} \]
For the smooth objective function, $\Phi$, we assume that for any unit tangent vector $Y$, the second order derivative
$Y^2(\Phi)(x):=Y(Y(\Phi)) (x) \le 1$,
where we use geometer's notation that $Y(\Phi)=\grad \Phi \cdot Y$.

The approximation error $E$ caused by using $h' $ instead of $h$ is
\[ \begin{split}
  &E :=  \int \Phi  h' dx - \int \Phi h dx\\
  &= \int_{\T^{a}} \int_{\T^{M-a}} \Phi h' dx^{1\sim M-a} dx^{M-a+1\sim M}
  - \int_{\T^{a}} \Phi(0, x^{M-a+1},\ldots, x^M) dx^{M-a+1\sim M} \\
  &=\int_{\T^{a}} E(x^{M-a+1\sim M}) \, d x^{M-a+1\sim M},
\end{split} \]
where $dx^{1\sim M-a} = dx^1\ldots dx^{M-a}$, and
\[ \begin{split}
E(x^{M-a+1\sim M}) 
:= \int_{\T^{M-a}} \phi h' dy - \phi(0) 
= \int_{\T^{M-a}} (\phi(y)-\phi(0)) h' dy ,
\end{split} \]
where $\phi (y) := \Phi (y, x^{M-a+1},\ldots, x^M)$, $y\in\T^{M-a}$.
Then by Taylor expansion, there is $\xi(y)$ such that
\[ \begin{split}
E(x^{M-a+1\sim M}) 
= \int_{\T^{M-a}} \left(Y(\phi) (0) |y| + Y^2(\phi)(\xi(y)) \frac{|y|^2}2 \right) h' dy.
\end{split} \]
Here $Y:=y/|y|$.
The first term is zero due to symmetry, hence by assumptions
\[ \begin{split}
|E(x^{M-a+1\sim M})|
\le \int_{\T^{M-a}}  |y|^2 h' dy
\\
= b^{-(M-a)} \int_{[-0.5b, 0.5b]^{M-a}}  (y^1)^2 +\ldots + (y^{M-a})^2 dy^1\ldots dy^{M-a}
\\
=  b^{-(M-a)} (M-a) \frac13 b^3 b^{M-a-1}
= \frac13 (M-a) b^2.
\end{split} \]
Hence,
\[ \begin{split}
  |E| 
  \le \int_{\T^{a}} \left|E(x^{M-a+1\sim M}) \right| \, d x^{M-a+1\sim M}
  \le (M-a) b^2.
\end{split} \]
For more general cases, there should be another error due to approximation within the attractor, but here we neglect it.

It is nontrivial to achieve optimal mesh adaptation in higher dimensions.
For now we assume optimal mesh, then we can restrict our computation to the attractor, and the cost is at least propotional to the number of cells in the mesh, so
\[ \begin{split}
S\sim \left(\frac1b\right)^{a} \sim \left( \frac{M-a}{E} \right)^{\frac a2} .
\end{split} \]
When the attractor dimension $a$ is higher than 4, this cost is much larger than the cost of our equivariant divergence formula estimated in \cref{e:hua}.
On the other hand, if the finite-elements are all globally supported, such as the Fourier basis,
or if the optimal implementation is not achieved, the cost can be as high as $O(b^{-M})$.

In higher dimensions, it is expensive to approximate the entire attractor by finite-elements.
Hence, it is also expensive to compute $\delta \tL h$ via finite-elements.

\section{Equivalence between the equivariant divergence formula and the fast tangent response formula}
\label{a:douyin}

This section shows that the fast (tangent) formula in our previous paper \cite{fr} is equivalent to the fast adjoint formula for the linear response in \cref{t:far} of this paper.
Both formulas can be either proved directly, or by first proving the other formula and then proving the equivalence between the two.
However, proving \cref{t:far} from the indirect approach is less intuitive;
moreover, if we want to start from \cref{t:fastformula}, prove \cref{t:far}, then prove \cref{t:vdiv}, then we can only prove \cref{t:vdiv} for $\rho$-almost everywhere.
Besides giving another proof of the main results in this paper, the equivalence also verifies the fast formula in our previous paper, which runs only forward along an orbit.

The main difference, in terms of utility, between the adjoint and tangent formulas is that the adjoint is more suitable for cases with many parameters, whereas the tangent is suitable for cases with a few parameters.
Because if we want to compute linear responses with respect to many perturbations, each controlled by a separate parameter $\gamma$, then we would have to compute our formula for many different $X$'s.
The main term in the adjoint formula, $\cS(\div^vf_*)$, does not depend on $X$, so this main term needs not be recomputed; the case with multiple parameters, solved by the adjoint algorithm, is tested in another paper \cite{far}.
In contrast, the main term in the fast tangent formula needs to be recomputed for each $X$, so the marginal cost is larger for a new parameter.
On the other hand, when we have only one parameter, then the tangent algorithm is faster, since it involves only the pushforward of vectors, whose computation is faster than the pullback of covectors in terms of clock time, even though the the number of flops (float
point operations) is the same.

The main idea to prove the equivalence is `adjointing' the fast formula of the unstable contribution. 
More specifically, we shall expand the unstable divergence, move major computations away from $X$ and $\phi_W$, and obtain an expansion formula for an adjoint operator.
Then we seek a neat characterization of the expansion formula, and we prove \cref{t:far} by further using the adjoint shadowing lemma.

\subsection{Fast formula for the unstable contribution}
\hfill\vspace{0.1in}
\label{s:UC}

Recall that the unstable contribution is 
\begin{equation} \begin{split} \label{e:uc}
  U.C.^W = \rho \left(\phi_W  \diverg_\sigma^u  X^u\right),
  \quad \textnormal{where} \quad 
  \phi_W := \sum_{m=-W}^W (\Phi \circ f^m-\rho(\Phi).
\end{split} \end{equation}
Here $\diverg_\sigma^u$ is the submanifold divergence on the unstable manifold under the conditional SRB measure.
The norm of this integrand is $O(\sqrt W)$, much smaller than the ensemble formula.
Note that the directional derivatives of $X^u$ are distributions.
We gave a fast formula for the unstable divergence.
It involves only $u$ many second-order tangent equations on one sample orbit, which runs forwardly in time.

\begin{theorem} [fast formula for unstable contribution \cite{fr}] \label{t:fastformula}
  Let $\{x_n:=f^nx_0\}_{n\ge0}$ be a orbit on the attractor, then for almost all $x_0$ according to the SRB measure $\rho$, for any $r_0\in \cD^u(x_0)$, 
  \[ \begin{split}
    U.C.^W 
    = \lim_{N \rightarrow \infty} \frac 1 {N} \sum_{n=0}^{N-1}
      \ip{r_n ,
      e_{n} },
    \quad\textnormal{where}\quad
    r_{n+1}= \tilde \beta P^\perp  r_n \in \cD^u(x_{n+1} ) .
  \end{split} \]
\end{theorem}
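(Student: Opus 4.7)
The plan is to derive \cref{t:fastformula} by combining \cref{t:far} with \cref{t:vdiv}, and then dualizing the resulting adjoint expression into a forward tangent recursion on an orbit. By \cref{t:far} and \cref{t:vdiv}, the unstable contribution has integrand
\[
-\frac{\delta \tL^u\sigma}{\sigma}(x) = \div^v X(x) + \bigl(\cS(\div^v f_*)\bigr)(x)\,X(x),
\]
a Holder continuous function on the axiom A attractor $K$. Because $\rho$ is mixing, Birkhoff's ergodic theorem replaces $\rho(\phi_W\,\div^u_\sigma X^u)$ by a time average along any $\rho$-typical orbit, giving
\[
U.C.^W = \lim_{N\to\infty} \frac1N \sum_{n=0}^{N-1} \phi_W(x_n)\,\bigl(\div^v X + \cS(\div^v f_*)\,X\bigr)(x_n).
\]
The goal is then to show that the summand at step $n$ equals $\ip{r_n,e_n}$ for a forward-propagated $u$-vector $r_n\in\cD^u(x_n)$.

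Next I would unfold the adjoint expansion
\[
\cS(\omega) = \sum_{k\ge 0} f^{*k}\cP^s\omega_k - \sum_{k\le -1} f^{*k}\cP^u\omega_k
\]
applied to $\omega = \div^v f_*$, contracted with $X$, and transported along the orbit via $f^n$. Using the $f_*$-equivariance of $V^u$ and $V^s$ together with the duality identities $\eta(P^u w)=\cP^u\eta(w)$ and $(f^*\eta)(w)=\eta(f_* w)$, I would push the adjoint pullbacks $f^{*k}$ off the covector side and onto the vector data $X$ and the observable derivatives of $\phi_W$. After this transfer, the infinite adjoint sum collapses into the action of a single $u$-vector $r_n$ paired with the unstable volume form $e_n$: the vector $r_n$ accumulates, step by step, the perturbation $X$, the Hessian data $\nabla f_*$ (which is exactly what makes $\div^v f_*$ a second-order object), and the contribution of $\phi_W$ at time $n$, all projected so as to remain in the $u$-dimensional auxiliary bundle $\cD^u(x_n)$.

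Reading off the update from the algebraic identification fixes the recursion: $r_{n+1}$ is a $u$-linear map applied to $r_n$ consisting of a pushforward by $f_*$, an additive step contribution built from $X$, $\phi_W$ and $(\nabla f_*)e$, followed by the oblique projection $P^\perp$ that removes stable components and keeps $r_n$ bounded. This composite map is precisely the $\tilde\beta P^\perp$ of the statement, so $\ip{r_n,e_n}$ reproduces the Birkhoff summand and \cref{t:fastformula} follows. The principal obstacle is the bookkeeping of this dualization: one must consolidate the asymmetric $n\ge 0$ and $n\le -1$ tails of the adjoint series into one forward recursion, respect the oblique $V^u\oplus V^s$ splitting so that renormalizations do not spoil $\rho$-invariance, and verify that $P^\perp$ indeed matches the split step of the adjoint shadowing construction; this last identification is what guarantees $r_n$ stays uniformly bounded and that the single-orbit time average converges to $U.C.^W$.
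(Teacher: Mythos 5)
Your overall route — derive the tangent formula from \cref{t:far} together with \cref{t:vdiv} by dualizing the adjoint expansion into a forward recursion — is in fact exactly what the paper does in Appendix~\ref{a:douyin} (the equivalence theorem~\ref{t:equi}, read in reverse). The paper explicitly advertises this strategy: ``Both formulas can be either proved directly, or by first proving the other formula and then proving the equivalence between the two.'' So the plan is sound. However, your write-up glosses over the three points that carry all the difficulty, and one of your identifications is simply wrong.

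First, the ``collapse'' of the infinite adjoint sum into $\ip{r_n,e_n}$ is not a bookkeeping exercise. What has to be shown is that the dual expansion operator satisfies $\cT e^b(r)=\eps(r)$ for all $r\in\cD^u$ (Lemma~\ref{l:cocube}); this comes from a telescoping identity plus the dominated-splitting estimate of Theorem~\ref{l:alice}, which says $e$ out-grows every other $u$-vector exponentially, giving the quantitative bound $\bigl|\ip{f_*^N r/|f_*^N e|, e_N}-\eps(r)\bigr|\le Cu\lambda^{2N}|r|$. You never name this ingredient, yet it is what makes the forward recursion converge and makes the time-average independent of $r_0$ (Lemma~\ref{l:expandUC}). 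Second, your description of the inhomogeneous step is incomplete: in $q_{n+1}$ the Hessian term is contracted against the \emph{shadowing vector} $\tv=S(\phi_W X)$, not against $X$ directly. This is precisely where the duality $\rho(\om\, S(X))=\rho(\cS(\om)\,X)$ between the shadowing and adjoint-shadowing operators enters; without producing $\tv$ explicitly you cannot read off $\tilde\beta$. Third, you call $P^\perp$ ``the oblique projection that removes stable components.'' It is not: $P^\perp$ is the \emph{orthogonal} projection onto the orthogonal complement of $V^u$ (Eq.~\eqref{e:nn1}), whereas the oblique stable projection is $P^s$. The whole point of the argument in Lemma~\ref{l:expandUC} is that $(f_*P^\perp)^n = f_*P^\perp f_*^{n-1}P^s$, so the orthogonal projection picks up the oblique one only after composing the recursion once; confusing the two projections would make the decay estimate opaque. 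In short: the approach is right and matches the paper's appendix, but you have skipped the telescoping/dominated-splitting lemma, the emergence of $\tv$, and misidentified $P^\perp$ — each of which is load-bearing.
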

\noindent{}Here $e=e_1\wcw e_u$ is the unit $u$-dimensional cube spanned by unstable vectors, 
and $\span \{e_i\}_{i=1}^u=V^u$.
Here $\ip{\cdot, \cdot}$ is the inner product between $u$-vectors;
\begin{equation} \begin{split} \label{e:Du}
  \cD^u := \{r= \sum_i e_1\wedge\cdots\wedge r_i \wedge\cdots\wedge e_u:\;
    r_i\in T_x\cM, e_i\in V^u\}.
\end{split} \end{equation}
is the space of derivatives of unstable cubes; 
note that the definition of $\cD^u$ does not depend on the selection of $e_i$'s, so long as they span $V^u$.
For any $r\in \cD^u$,
\begin{equation} \begin{split} \label{e:nn1}
  P^\perp r:= \sum_i e_1 \wcw  P^\perp r_i \wcw e_u,
\end{split} \end{equation}
where the second $P^\perp$ orthogonally projects a vector to the subspace perpendicular to $V^u$.
Here $\tilde \beta$ is the renormalized second-order tangent equation governing the propagation of derivatives of cubes,
\begin{equation} \begin{split} \label{e:hai}
  r_{n+1} = \tilde \beta P^\perp r_n 
  = \frac{f_* P^\perp } J r_n 
  + q_{n+1},
  \quad \textnormal{where} \quad 
  q_{n+1} = \frac{(\nabla_{\tv_n} f_*) } J e_{n} + (\phi_W \nabla_{e} X)_{n+1}.
\end{split} \end{equation}
Here the Jacobian determinant $J:= |f_*e_n|$ 
when applied to quantities at $x_n$.
$\tv:=S(\phi_W X)$ is the shadowing vector of $\phi_W X$, which is the only bounded solution to
\begin{equation}
  \tv_{n+1} = f_*\tv_n + (\phi_W X)_n.
\end{equation}

The fast response algorithm based on the fast formula was demonstrated on a 21-dimensional system with a 20-dimensional unstable subspace.
For the same accuracy, fast response is orders of magnitude faster than ensemble and operator algorithms; it is even faster than finite difference \cite{fr}.

\subsection{Expansion formulas of unstable contribution}
\hfill\vspace{0.1in}
\label{s:expand}

\begin{lemma}
\label{l:expandUC}
The orbit-wise average expression in \cref{t:fastformula} converges $\rho$-almost surely according to $x_0$ to
\[ \begin{split}
\lim_{N \rightarrow \infty} \frac 1 {N} \sum_{n=0}^{N-1} \ip{r_n , e_{n} }
\overset{a.e}{=} \rho \left( e^b Tq \right) 
= \rho\left((\cT e^b) q\right),
\end{split} \]
where
\[ \begin{split}
  q := \frac{\nabla_{e_{-1}} f_* } J \tv_{-1} + \phi_W \nabla_{e} X,
  \quad
  e^b(\cdot):=\ip{e,\cdot},
  \\
  T(\cdot) :=\sum_{k\ge0} \left( \frac{f_* P^\perp} {J} \right) ^ {k} (\cdot)_{-k},
  \quad
  \cT e^b (\cdot) := \sum_{k\ge0} \ip{ \left( \frac{f_* P^\perp} {J} \right) ^ {k} (\cdot), e_{k} }.
\end{split} \]
Here $e_{-1} = e\circ f^{-1}$, $v_{-1}=v\circ f^{-1}$.
  Note that this lemma does not assume that the expressions are related to linear response.
\end{lemma}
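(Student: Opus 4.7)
The plan is to unroll the recurrence $r_{n+1}=\frac{f_*P^\perp}{J}r_n+q_{n+1}$ backward along the orbit to obtain
\[
r_n = \Bigl(\tfrac{f_*P^\perp}{J}\Bigr)^n r_0 \;+\; \sum_{k=0}^{n-1}\Bigl(\tfrac{f_*P^\perp}{J}\Bigr)^k q_{n-k},
\]
where each factor of the operator is evaluated at the appropriate base point along the orbit. I would first show that the transient $(\tfrac{f_*P^\perp}{J})^n r_0$ decays exponentially, so it drops out of the Cesàro average. The key fact is that $\tfrac{f_*P^\perp}{J}$ acts as a uniform contraction on $\cD^u$: $P^\perp$ kills the $V^u$ component, leaving a vector close to $V^s$ (up to the angle between $V^u$ and $V^s$); $f_*$ then expands the stable part by at most $C\lambda$ and reintroduces only a controlled $V^u$ component, which is dwarfed once we divide by the unstable growth factor $J=|f_*e|$. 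Using the estimate already quantified in Theorem \ref{l:alice}, I expect $\bigl|(\tfrac{f_*P^\perp}{J})^k r\bigr|\le C\lambda^k|r|$ uniformly on $K$ for all $r\in\cD^u$.

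The same contraction applied to $q$, whose sup norm on the orbit is bounded because $\tv=S(\phi_W X)$ is bounded by the (forward) shadowing lemma and $\nabla_e X$, $\nabla_e f_*$ are continuous on the compact attractor, shows that $\sum_{k=0}^{n-1}(\tfrac{f_*P^\perp}{J})^k q_{n-k}$ converges uniformly and absolutely to $Tq(x_n)$ as $n\to\infty$. Pairing with $e_n$ gives $\ip{r_n,e_n}=e^b Tq(x_n)+o(1)$ uniformly in $n$. Since $e^b Tq$ is bounded (in fact Hölder) on $K$, Birkhoff's ergodic theorem yields
\[
\frac1N\sum_{n=0}^{N-1}\ip{r_n,e_n}\;\longrightarrow\;\rho(e^b Tq)\quad\text{for $\rho$-a.e.\ starting point $x_0$,}
\]
which is the first equality.

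For the second equality, absolute convergence of $Tq$ lets me interchange sum and integral:
\[
\rho(e^b Tq) = \sum_{k\ge 0}\rho\Bigl(\bigl\langle\bigl(\tfrac{f_*P^\perp}{J}\bigr)^k q_{-k},\,e\bigr\rangle\Bigr).
\]
For each $k$, I would push the integration variable forward by $k$ steps. By $f$-invariance of $\rho$, the base point of $q_{-k}$ at $x$ becomes $x$ itself while $e$ evaluated at the shifted point equals $e\circ f^k=e_k$, so the $k$-th term becomes $\rho(\ip{(\tfrac{f_*P^\perp}{J})^k q,\,e_k})$; resumming yields $\rho((\cT e^b)q)$.

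The main obstacle is establishing the uniform exponential contraction of $\tfrac{f_*P^\perp}{J}$ on $\cD^u$: this requires a quantitative handle on the angle between $V^u$ and $V^s$ and on how a vector in $(V^u)^\perp$ develops a small $V^u$ component after one application of $f_*$. Once that decay rate is uniform on $K$, the Birkhoff step, the swap of sum and integral, and the shift by $f$-invariance are all routine.
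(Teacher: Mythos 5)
Your overall strategy mirrors the paper's: unroll the recurrence, show the transient $(\tfrac{f_*P^\perp}{J})^n r_0$ dies exponentially, identify the stationary part with $Tq$, use that $Tq$ is continuous (Hölder) on $K$ to invoke Birkhoff, then swap sum and integral and shift by $f$-invariance of $\rho$ to pass from $e^b T$ to $\cT e^b$. The Birkhoff step, the interchange, and the invariance-shift are all handled the same way the paper does.

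However, the mechanism you offer for the exponential decay is the one part that does not go through, and you correctly flag it as the obstacle. The operator $\tfrac{f_*P^\perp}{J}$ is \emph{not} a per-step contraction on $\cD^u$, and the ``$V^u$ component is dwarfed once we divide by $J$'' intuition is wrong. Concretely, take $r=r_1\wedge e_2\wedge\cdots\wedge e_u$ with $r_1\perp V^u$ and $|r_1|=1$. Split $r_1=r_1^s+r_1^u$ under the oblique decomposition $V^s\oplus V^u$; since $V^u\not\perp V^s$ in general, $r_1^u$ is of size $O(1)$. Then $f_*r_1^u\wedge f_*e_2\wedge\cdots\wedge f_*e_u$ equals $c_1 f_*e$ with $c_1$ the $\eps^1$-coefficient of $r_1^u$, and after dividing by $J=|f_*e|$ this piece has norm $|c_1|$, which is $O(1)$, not small. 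The piece is only killed at the \emph{next} application of $P^\perp$, not by the normalization $J$.

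The paper resolves this not by a one-step estimate but by a telescoping identity that uses the $f_*$-invariance of both $V^u$ and $V^s$: since $f_*$ preserves $V^u$, $P^\perp f_*P^\perp=P^\perp f_*$, and since $P^\perp$ annihilates $V^u$, $P^\perp=P^\perp P^s$; combined with $P^s f_*=f_*P^s$, these give
\[
(f_*P^\perp)^n = f_*P^\perp f_*^{n-1}P^\perp = f_*P^\perp f_*^{n-1} = f_*P^\perp f_*^{n-1}P^s .
\]
Now the $n$-fold decay comes entirely from the $V^s$-slot carried by $P^s r$: after normalizing by $J^{n}=|f_*^{n}e|$, the mixed $u$-vector with one stable slot shrinks exponentially, which is precisely the type of estimate furnished by Theorem~\ref{l:alice} (via Lemma~\ref{l:mei}). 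If you replace your per-step contraction claim with this algebraic collapse followed by the stable-direction decay, the rest of your argument is sound and coincides with the paper's proof.
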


\begin{remark*}
(1)
Here $q$ is in the space of derivative-like $u$-vectors $\cD^u$, 
$e^b$ is in the dual space $\cD^{u*}$;
$T$ is a linear operator on $L^2(\rho, \cD^u)$,
its adjoint operator is $\cT$, which is an operator on $L^2(\rho, \cD^{u*})$.
(2) 
This is not the full expansion, since $\tv$ can be further expanded.
But there is no need to expand $\tv$, since the adjoint shadowing lemma has told us very well how to deal with it.
Also note that the expansion formula of $p$ in \cite{fr} is also not the full expansion, and is different from our current one.
\end{remark*}

\begin{proof}
We can use the linear superposition law to write out the explicit solution of the inductive relation of $r$ in \cref{e:hai},
\[ \begin{split}
  r_{n} 
  = \left( \frac{ f_*P^\perp} {J} \right) ^ {n} r_0
  + \sum_{1\le k\le n} \left( \frac{ f_* P^\perp} {J} \right) ^ {n-k} q_{k}.
\end{split} \]

For any $x_0\in K$ and any $r_0\in \cD^u(x_0)$ and $q_{k}\in \cD^u (x_k)$
\[ \begin{split}
\left| \left(\frac{ f_* P^\perp} {J} \right) ^n r_0 \right| \rightarrow 0,
\quad\quad
\left| \left( \frac{ f_* P^\perp} {J} \right) ^ {n-k} q_{k}\right|\rightarrow 0
\end{split} \]
exponentially fast as $n\rightarrow \infty$.
To see this, notice that the invariance of the stable and unstable subspace indicates
\[ \begin{split}
(f_* P^\perp)^n
= f_* P^\perp f_*^{n-1} P^\perp
= f_* P^\perp f_*^{n-1} 
= f_* P^\perp P^s f_*^{n-1} 
= f_* P^\perp f_*^{n-1} P^s,
\end{split} \]
where $P^s$ is defined similar to \cref{e:nn1}. 
Since $P^s$ keeps only the stable part, which decays during pushfowards, $|(\frac{ f_* } {J})^{n-1} P^s r_0|$ decays exponentially fast as $n\rightarrow\infty$.

Hence, $r_n - Tq(x_n)\rightarrow 0$ for any $x_0$ and $r_0$, so
\[ \begin{split}
\lim_{N \rightarrow \infty} \frac 1 {N} \sum_{n=0}^{N-1} \ip{r_n , e_{n} }
= \lim_{N \rightarrow \infty} \frac 1 {N} \sum_{n=0}^{N-1} \ip{Tq(x_n) , e(x_n) }.
\end{split} \]
Note that $Tq$ is a continuous function on $K$ since it is the sum of a uniformly convergent series of function; in fact, it is even Holder continuous, by the same argument given in the appendix of \cite{Ni_asl}.
Hence, by the Birkhoff's ergodic theorem, for $\rho$-almost all $x_0$, the sum to the integration by $\rho$, that is
\[ \begin{split}
\lim_{N \rightarrow \infty} \frac 1 {N} \sum_{n=0}^{N-1} \ip{Tq(x_n) , e(x_n) }
\overset{a.e}{=} \rho \left(\ip{Tq , e }\right)
= \rho \left(e^b Tq  \right).
\end{split} \]

The second equality in the lemma is due to the duality between pushfoward on vectors and pullback on covectors, and the invariance of SRB measures.
\end{proof}

\subsection{Characterizing \texorpdfstring{$\cT e^b$}{Te} by unstable co-cube}
\hfill\vspace{0.1in}

\begin{lemma}  \label{l:cocube}
$\cT e^b(r) = \eps(r)$, $\forall r\in \cD^u$.
\end{lemma}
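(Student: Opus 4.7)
The plan is to prove the one-step identity
\[
\eps|_x(r) \;=\; \ip{r, e(x)} \;+\; \eps|_{fx}(A r)
\qquad\text{for every } r\in\cD^u(x),
\]
where I abbreviate $A := f_* P^\perp / J$, and then iterate. Telescoping $N$ steps gives
\[
\eps|_x(r) \;=\; \sum_{k=0}^{N-1}\ip{A^k r, e_k} \;+\; \eps|_{f^N x}(A^N r),
\]
and the remainder vanishes as $N\to\infty$ because $|A^N r|$ decays exponentially — this was already shown in the proof of Lemma~\ref{l:expandUC}, using $P^\perp = P^\perp P^s$ together with the contraction of $f_*$ on $V^s$ — while $|\eps|$ is uniformly bounded on the compact hyperbolic set $K$. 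Hence $\cT e^b(r) = \eps(r)$.

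To carry out the one-step identity I fix $\{e_i\}_{i=1}^u$ orthonormal in $V^u$, extend to a basis of $T\cM$ with $\{e_i\}_{i>u}\subset V^s$, and take the dual basis $\{\eps^i\}$, whose crucial feature is that $\eps^i|_{V^s}=0$ for $i\le u$. Expanding the determinant formula for the inner product of simple $u$-vectors gives
\[
\eps(r) \;=\; \sum_i \eps^i(r_i), \qquad \ip{r, e} \;=\; \sum_i \ip{r_i, e_i} \;=\; \sum_i \eps^i\bigl((I-P^\perp)r_i\bigr),
\]
so their difference collapses to $\sum_i \eps^i(P^\perp r_i)$.

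For the right-hand side, since $V^u$ and $V^s$ are $f_*$-invariant the oblique projector $P^u$ commutes with $f_*$, and since $\eps|_{fx}$ annihilates any $V^s$-slot, only the $V^u$-component of $f_* P^\perp r_i$ contributes; that component equals $f_*(P^u P^\perp r_i)$. The resulting wedge $f_*(e_1\wcw P^u P^\perp r_i\wcw e_u)$ lies in $\wedge^u V^u(fx)$ and equals $J\,\eps^i|_x(P^u P^\perp r_i)\,e(fx)$, so the Jacobian cancels the prefactor in $A$ and yields $\eps|_{fx}(A r) = \sum_i \eps^i|_x(P^u P^\perp r_i)$. Because $\eps^i$ kills $V^s$ one has $\eps^i(P^\perp r_i)=\eps^i(P^u P^\perp r_i)$, matching the expression for $\eps(r)-\ip{r,e}$ above and proving the recursion.

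The only real obstacle is algebraic: one has to juggle the orthogonal decomposition $I=(I-P^\perp)+P^\perp$ (which enters through $\ip{\,\cdot\,,e}$) against the oblique decomposition $I=P^u+P^s$ (which enters through $\eps^i$), and to notice that $\eps^i\circ P^\perp=\eps^i\circ P^u\circ P^\perp$ is what bridges them. The commutation $f_*P^u=P^u f_*$ combined with $\eps^i|_{V^s}=0$ is what makes each slot-wise wedge collapse into a single scalar, so that the Jacobian factor $J$ cancels cleanly and the telescoping closes.
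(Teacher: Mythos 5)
Your proof is correct, and it takes a genuinely different (though related) route from the paper's. Write $A := f_*P^\perp/J$. Both arguments telescope, but they telescope different quantities. The paper rewrites each summand as $\ip{A^k r, e_k} = \ip{f_*^k r/J^k, e_k} - \ip{f_*^{k-1}r/J^{k-1}, e_{k-1}}$ (using $P^\perp f_* P^\perp = P^\perp f_*$ and $\ip{P^\parallel\cdot, e} = \ip{\cdot, e}$), so the partial sum collapses to the boundary term $\ip{f_*^N r/J^N, e_N}$; it then invokes \cref{l:alice} from appendix~\ref{s:zhuang} to show this boundary term converges to $\eps(r)$, which is a dominated-splitting estimate on the $u$-Grassmannian. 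You instead establish the one-step recursion $\eps(r) = \ip{r, e} + \eps(Ar)$ and iterate, which expresses the partial sum directly as $\eps(r) - \eps(A^N r)$; the remainder vanishes because $|A^N r| \to 0$ exponentially (already observed in the proof of \cref{l:expandUC}) and $\eps$ is uniformly bounded on the compact set $K$. The payoff of your route is that it makes the lemma self-contained: the remainder estimate is immediate and \cref{l:alice} is not needed here. The cost is that your one-step identity takes a bit more algebraic care, reconciling the orthogonal projector $P^\perp$ with the oblique $P^u$, $P^s$ and with the vanishing of $\eps^i$ on $V^s$, whereas the paper's telescoping step is shorter. Since \cref{l:alice} is still used in \cref{l:expand}, the savings are local to this lemma, but your argument is arguably cleaner.
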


\begin{proof}
Since $P^\perp f_* P^\perp = P^\perp f_*$,
$P^\parallel:=I_d - P^\perp$ orthogonally projects to the span of $e$, we see that for any $r\in \cD^u$,
\[ \begin{split}
  \ip{ \left( \frac{f_* P^\perp} {J} \right) ^ {k} r, e_{k} } 
  = \ip{ \frac{f_* P^\perp f_*^{k-1}} {J^k}  r, e_{k} } 
  = \ip{ \frac{f_*^{k}} {J^k}  r, e_{k} } 
  - \ip{ \frac{f_* P^\parallel f_*^{k-1}} {J^k}  r, e_{k} } \\
  = \ip{ \frac{f_*^{k}} {J^k}  r, e_{k} } 
  - \ip{ \frac{P^\parallel f_*^{k-1}} {J^{k-1}}  r, e_{k-1} } 
  = \ip{ \frac{f_*^{k}} {J^k}  r, e_{k} } 
  - \ip{ \frac{f_*^{k-1}} {J^{k-1}}  r, e_{k-1} }. \\
\end{split} \]
The last equality is because $\ip{P^\parallel \cdot, e}= \ip{\cdot, e}$.
Hence, 
\[ \begin{split}
  \cT e^b(r)
  = \ip{f, e_0} + \lim_{N\rightarrow\infty}
  \sum_{k\ge1}^N \ip{ \frac{f_*^{k}} {J^k}  r, e_{k} } 
  - \ip{ \frac{f_*^{k-1}} {J^{k-1}}  r, e_{k-1} } 
  = \lim_{N\rightarrow\infty} \ip{ \frac{f_*^{N}} {J^N}  r, e_{N} } .
\end{split} \]
The lemma is proved once we show that this equals $\eps(r)$.
Intuitively, this is because the stable parts decay while the unstable parts grow. 
A more careful proof of this statement is in appendix~\ref{s:zhuang}.
\end{proof}

Finally, we can show that the fast (forward) formula is equivalent to the fast adjoint formula for unstable contributions.
Note that here we are directly showing that the two expressions are equivalent, without knowing that these expressions are in fact for the unstable contribution of the linear response.

\begin{theorem} \label{t:equi}
The expression in \cref{t:fastformula} is equivalent to the expression in \cref{t:far}.
That is,
\[ \begin{split}
\lim_{N \rightarrow \infty} \frac 1 {N} \sum_{n=0}^{N-1} \ip{r_n , e_{n} }
\overset{a.e}{=} \rho\left((\cT e^b) q\right)
= \rho\left(\phi_W(\cS(\div^v f_*) X + \div^v X )\right).
\end{split} \]
\end{theorem}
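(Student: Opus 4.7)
My plan is to chain Lemmas~\ref{l:expandUC} and~\ref{l:cocube} and close the remaining gap by a single application of the adjoint shadowing lemma. The first lemma rewrites the Birkhoff-type average as $\rho((\cT e^b) q)$ and the second collapses $\cT e^b$ to $\eps$, so the task reduces to the pointwise-then-integrated identity
\[
  \rho(\eps(q)) = \rho(\phi_W(\cS(\div^v f_*) X + \div^v X)).
\]
Thus the skeleton is: use the two lemmas for the ``easy'' half of the equivalence; the actual algebra in the remaining half is short.

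Next, I would substitute $q = J^{-1}(\nabla_{e_{-1}} f_*)\tv_{-1} + \phi_W \nabla_e X$ and split $\eps(q)$ into two summands. The $\phi_W \nabla_e X$ piece immediately yields $\phi_W \div^v X$ by the very definition $\div^v X = \eps\, \nabla_e X$, matching the second term on the right-hand side. For the first piece, at a point $x$ the covector $\eps$ is evaluated at $x$ while $\nabla_{e_{-1}} f_*$ and $|f_* e_{-1}|$ are evaluated at $f^{-1}x$; using the consistent-orientation convention $\eps(x) = \eps_1(f^{-1}x)$ set up in Section~\ref{s:notations}, the first piece reassembles into $((\div^v f_*)\,\tv)\circ f^{-1}$ in exact accordance with the definition of $\div^v f_*$. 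The $f$-invariance of $\rho$ then removes the $f^{-1}$, leaving $\rho((\div^v f_*)\,\tv)$.

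Finally, since $\tv = S(\phi_W X)$ by construction of the shadowing vector in Theorem~\ref{t:fastformula}, the first characterization of the adjoint shadowing operator gives $\rho(\omega\, S(Y)) = \rho(\cS(\omega)\, Y)$ with $\omega = \div^v f_*$ and $Y = \phi_W X$; factoring out the scalar $\phi_W$ yields $\rho(\phi_W\, \cS(\div^v f_*)\, X)$, which together with the $\phi_W \div^v X$ term gives the desired right-hand side. The main obstacle I anticipate is the bookkeeping of base points: tracking that $\eps$ lives at $x$ while $\nabla_{e_{-1}}f_*$ lives at $f^{-1}x$, and that this shift is precisely absorbed by the $\eps_1$ appearing in the definition of $\div^v f_*$, requires careful use of the orientation convention, without which sign or normalization errors appear. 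A secondary technical check is to verify that $\div^v f_*$ and $\phi_W X$ have the Hölder regularity demanded by the first characterization of $\cS$; this follows from the $C^3$ hypothesis on $f$, the $C^2$ hypothesis on $\Phi$, and standard regularity of the unstable bundle.
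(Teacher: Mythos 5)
Your proposal is correct and follows essentially the same route as the paper's proof: apply Lemma~\ref{l:expandUC}, collapse $\cT e^b$ to $\eps$ via Lemma~\ref{l:cocube}, split $\eps(q)$ into the $\div^v f_*$ and $\div^v X$ pieces, and convert $\rho((\div^v f_*)\,\tv)$ to $\rho(\phi_W\,\cS(\div^v f_*)X)$ by the duality characterization of $\cS$. Your extra bookkeeping of base points and the orientation convention is a fuller account of a step the paper leaves implicit, but it is the same argument.
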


\begin{proof}
The first equality is due to \cref{l:expandUC}.
For the second equality, substitute lemma~\ref{l:cocube} and the definition of $q$, we get
\[ \begin{split}
\rho\left((\cT e^b) q\right)
= \rho\left(\eps \frac{\nabla_{e_{-1}} f_* } J \tv_{-1}
  + \eps \phi_W \nabla_{e} X \right)
= \rho\left((\div^v f_*) \tv + \phi_W \div^v X \right) .
\end{split} \]
Recall that $\tv:=S(\phi_W X)$ is the shadowing vector for $\phi_W X$; hence, by the duality between $S$ and $\cS$, which is the first characterization of $\cS$ listed in \cref{s:recurs}, we have
\begin{equation} \begin{split} \label{e:lishi}
\rho\left((\cT e^b) q\right)
= \rho\left(\phi_W(\cS(\div^v f_*) X + \div^v X )\right) .
\end{split} \end{equation}
\end{proof}

Furthermore, we can prove \cref{t:vdiv} for $\rho$-almost everywhere, starting from the fast (forward) formula.
This follows from comparing \cref{e:lishi} with \cref{e:uc}, since the equalities hold for any smooth $\phi_W$, \cref{t:vdiv} must hold $\rho$-almost everywhere.
This is slightly weaker than \cref{t:vdiv} proved in the main body of the paper, which holds on the entire hyperbolic attractor.

\section{relative decay of pushing forward \texorpdfstring{$\cD^u$}{Du}}
\label{s:zhuang}

We prove a theorem used in both proofs of the $v$-divergence formula.
This theorem is essentially the dominated splitting on the $u$-dimensional Grassmannian: it says $e$ grows exponentially faster than all other $u$-vectors.

\begin{theorem} [relative decay of pushing forward $\cD^u$] \label{l:alice}
For any $r$ in the space $\cD^u$ defined in \cref{e:Du},
\[ \begin{split}
  \left|\ip{ \frac{f_*^{N}r} {|f_*^{N} e |}, e_{N} } - \eps (r) \right|
  \le C u \lambda^{2N} |r|.
\end{split} \]
Here $|e|=|e_N|=1$.
This difference goes to zero as $ N\rightarrow \infty$.
\end{theorem}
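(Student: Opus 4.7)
The plan is to split $r$ slot-wise using the hyperbolic decomposition $T\cM = V^u \oplus V^s$, dispatch the fully unstable part exactly, and bound the stable remainder using hyperbolic estimates. Write $r_i = r_i^u + r_i^s$ with $r_i^u := P^u r_i$, $r_i^s := P^s r_i$, and split $r = r^u + r^s$ correspondingly. Each summand of $r^u$ lies in $\wedge^u V^u$; after expanding $r_i^u = \sum_j a_{ij} e_j$ and dropping wedges with repeated factors, it collapses to $a_{ii}\, e$. Hence $r^u = \big(\sum_i a_{ii}\big)\, e = \eps(r)\, e$, since $\eps$ is dual to $\{e_j\}$ and annihilates $V^s$. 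By $f_*$-equivariance and the convention $e_N = f_*^N e/|f_*^N e|$, the $r^u$ part contributes exactly $\eps(r)$ to $\ip{f_*^N r, e_N}/|f_*^N e|$, reducing the claim to bounding $|\ip{f_*^N r^s, e_N}|/|f_*^N e|$.

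By Cauchy--Schwarz and linearity it suffices to bound $|f_*^N V_i|/|f_*^N e|$ for each of the $u$ summands $V_i := e_1 \wcw r_i^s \wcw e_u$ of $r^s$. The base-times-height identity $|v_1 \wedge \cdots \wedge x \wedge \cdots \wedge v_u| = |v_1 \wedge \cdots \wedge \widehat{v_i} \wedge \cdots \wedge v_u| \cdot |P^\perp x|$, with $P^\perp$ orthogonally projecting perpendicular to $\mathrm{span}\{v_j : j \ne i\}$, lets the common $(u-1)$-wedge cancel, yielding
\[ \frac{|f_*^N V_i|}{|f_*^N e|} = \frac{|P_i^\perp f_*^N r_i^s|}{|P_i^\perp f_*^N e_i|}, \]
where $P_i^\perp$ is the orthogonal projection perpendicular to $\mathrm{span}\{f_*^N e_j : j \ne i\}$. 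The numerator is controlled by stable contraction: $|P_i^\perp f_*^N r_i^s| \le |f_*^N r_i^s| \le C \lambda^N |r_i^s|$.

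The denominator is the distance from $f_*^N e_i$ to the span of $\{f_*^N e_j : j \ne i\}$, so $|P_i^\perp f_*^N e_i| = \inf_\alpha |f_*^N w_\alpha|$ with $w_\alpha := e_i - \sum_{j \ne i} \alpha_j e_j \in V^u$. The uniform expansion bound $|f_*^N w| \ge C^{-1} \lambda^{-N} |w|$ on $V^u$ gives $|P_i^\perp f_*^N e_i| \ge C^{-1} \lambda^{-N} \inf_\alpha |w_\alpha|$, and the remaining infimum is the distance from $e_i$ to $\mathrm{span}\{e_j : j \ne i\}$ at the base point, uniformly bounded below by continuity of the basis on the compact attractor. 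Multiplying the two bounds and summing over the $u$ slots yields the claimed $C u \lambda^{2N} |r|$ estimate.

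The main obstacle is extracting the sharp $\lambda^{2N}$ rate rather than the crude $\lambda^N$ one gets from stable contraction alone. The extra factor $\lambda^N$ must come from the denominator, and its lower bound requires the base-times-height identity to pull the orthogonal-complement computation back to $V^u$ at the initial point, where the uniform unstable-expansion bound can be applied to every competitor $w_\alpha$ simultaneously rather than to $f_*^N e_i$ alone.
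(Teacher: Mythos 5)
Your proof is essentially correct and follows the same global strategy as the paper's: decompose $r$ into an unstable part $r^u = \eps(r)\,e$ (which passes through the inner product exactly) and a stable remainder, then bound the stable remainder by $C\lambda^{2N}$. The one genuinely nicer touch is the treatment of the denominator: the paper proves a sub-lemma (its Lemma~\ref{l:mei}, $C\lambda^n|f_*^n e| \ge |f_*^n(e_2\wcw e_u)|\,|e_1|$) by a contradiction argument involving an orthogonalized vector $e_1'$; you instead apply the base-times-height identity to both $|f_*^N V_i|$ and $|f_*^N e|$ so the $(u-1)$-wedge cancels, and then bound $|P_i^\perp f_*^N e_i| = \inf_\alpha|f_*^N w_\alpha| \ge C^{-1}\lambda^{-N}\inf_\alpha|w_\alpha|$ directly from unstable expansion. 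These are the same estimate — rewriting the paper's lemma via base-times-height gives exactly $|P_1^\perp f_*^n e_1|\ge C^{-1}\lambda^{-n}|e_1|$ — but your derivation is cleaner and avoids the contradiction.

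There is one gap you should close: the final step ``summing over the $u$ slots yields $Cu\lambda^{2N}|r|$'' silently assumes $|r_i^s|\le C|r|$, which is not automatic. The slot-wise representation $r=\sum_i e_1\wcw r_i\wcw e_u$ is highly non-unique (e.g.\ $r=0$ admits representations with arbitrarily large $|r_i|$), so for a generic choice of $r_i$ there is no control on $|r_i^s|$ by $|r|$. You need to (i) fix the canonical representation with each $r_i\perp V^u$ (and $\{e_i\}$ orthonormal), under which $r$ decomposes into mutually orthogonal $u$-vectors so $|r_i^\perp|\le |r|$, and then (ii) relate $|r_i^s|=|P^s r_i|$ to $|r_i^\perp|$ via the uniform lower bound on the angle between $V^s$ and $V^u$ on the compact attractor, giving $|r_i^s|\le C|r_i^\perp|\le C|r|$. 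The paper devotes a separate paragraph to exactly this, and without it the constant in your bound is not controlled. Similarly, the phrase ``uniformly bounded below by continuity of the basis'' for $\inf_\alpha|w_\alpha|$ is hand-wavy unless you fix an orthonormal basis, in which case it equals $1$ and no continuity argument is needed.
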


\begin{proof}
Assume for convenience that $\{ e_i \}_{ i=1 }^u$ is an orthonormal basis for $V^u$, and denote $e:=e_1\wcw e_u$.
Note that $\cD^u$ is a subspace of the Grassmannian $Gr(u,T\cM)$, and it admits the decomposition
\[ \begin{split}
  \cD^u = \wedge^uV^u \bigoplus V^s\wedge^{u-1}V^u.
\end{split} \]
Hence we can rewrite $r$ on the orthonormal basis $\{ e_i \}_{ i=1 }^u$, as
\[ \begin{split}
r = a e + \sum_i e_1\wcw r_i\wcw e_u,
\quad \textnormal{where} \quad a\in\R, r_i\in V^s.
\end{split} \]
See the appendix of \cite{fr} for a change of basis formula.

Since $\eps (e) = 1 = \ip{e_N,e_N} = \ip{f_*^Ne/|f_*^Ne|, e_N}$, the first term in $r$ cancels with $\eps(r)$.
Also notice that since $r_i\in V^s$, $\eps(e_1\wcw r_i\wcw e_u)=0$, so
\[ \begin{split}
  \ip{ \frac{f_*^{N}r} {|f_*^{N} e |}, e_{N} } - \eps (r)
  = \sum_{i=1}^u \frac 1 {|f_*^{N} e |} 
  \ip{ f_*^{N} (e_1\wcw r_i\wcw e_u), e_{N} }.
\end{split} \]
Intuitively, because that $r_i\in V^s$ decays exponentially fast, the above formula decays to zero as $N\rightarrow\infty$.
But we should be more careful when proving it for $u$-vectors.
We first prove the following lemma.


\begin{lemma} \label{l:mei}
In this lemma, let $C$ be the constant used in the definition of hyperbolicity.
If $\{ e_i \}_{ i=1 }^u$ is an orthogonal basis for $V^u$, denote $ e: =e_1\wcw e_u$, then
\[ \begin{split}
  C\lambda^n |f_*^n e | \ge|f_*^n(e_2\wcw e_u)| |e_1|, 
  \quad \textnormal{} \quad 
  \forall n\ge0.
\end{split} \]
\end{lemma}

\begin{proof}
Assume for contradiction that the lemma is false, that is, for some $n$, 
\[ \begin{split}
  C\lambda^n |f_*^n e | < |f_*^n(e_2\wcw e_u)| |e_1|.
\end{split} \]
We can find $e_1'\in V^u$, such that
$f^n_* e_1'\perp \spanof\{f_*^n e_2, \cdots, f_*^n e_u\}$,
and $e_1'\wedge e_2\wcw e_u =  e $.
As a result, $|e_1'|\ge|e_1|$. Hence, by our assumption,
\[ \begin{split}
C\lambda^n |f_*^n(e_2\wcw e_u)| |f^n_* e_1'|
=   C\lambda^n |f_*^n e | < |f_*^n(e_2\wcw e_u)| |e_1|
\end{split} \]
\[ \begin{split}
  \Rightarrow C\lambda^n |f^n_* e_1'| < |e_1| \le |e_1'|.
\end{split} \]
Denote $w:=f^n_* e_1'$, then  $w\in V^u$, but $C\lambda^n |w| < |f_*^{-n}w|$,
contradicting our hyperbolicity assumption.
\end{proof}

With lemma~\ref{l:mei}, also note that $|e_1|=1$, and that  $|f_*^{N} (r_1\wedge e_2\wcw e_u)|<|f_*^{N} r_1|\, |f_*^N (e_2\wcw e_u)|$,
\[ \begin{split}
  \frac{|f_*^{N} (r_1\wedge e_2\wcw e_u)|} {|f_*^{N}  e |} 
  \le C \lambda^N \frac{|f_*^{N} r_1|\, |f_*^N (e_2\wcw e_u)|}
  {|e_1| \, |f_*^N (e_2\wcw e_u)|} \\
  = C \lambda^N \frac{|f_*^{N} r_1|} {|e_1|} 
  \le C \lambda^{2N} \frac{|r_1|} {|e_1|} 
  = C \lambda^{2N} |r_1|.
\end{split} \]
Note that here different constant $C$'s may take different values in each expression.

Similarly, we run the above arguments for any $r_i$, to get
\[ \begin{split}
  \left|\ip{ \frac{f_*^{N}r} {|f_*^{N} e |}, e_{N} } - \eps (r) \right|
  = \sum_{i=1}^u \frac 1 {|f_*^{N} e |} 
  \ip{ f_*^{N} (e_1\wcw r_i\wcw e_u), e_{N} }\\
  \le \sum_{i=1}^u \frac {|f_*^{N} (e_1\wcw r_i\wcw e_u)|} {|f_*^{N} e |} 
  \le C \lambda^{2N} \sum_{i=1}^u |r_i|.
\end{split} \]
In the above we used the fact that $|\cdot|$ is the norm induced by the inner-product on $u$-vectors.

We still need to bound $|r_i|$ by $|r|$.
Since the angle between $V^s$ and $V^u$ is bounded from below by a positive number, we have $|r_i|\le C |r_i^\perp|$, where $r_i^\perp := P^\perp r_i$.
Also, we can decompose $r$  into orthogonal $u$-vectors
\[ \begin{split}
r = a' e + \sum_i e_1\wcw r_i^\perp \wcw e_u,
\quad \textnormal{where} \quad a'\in\R, \; r_i^\perp\perp V^u.
\end{split} \]
Hence $|r|^2=|a'e|^2+\sum|r_i^\perp|^2$, so
\[ \begin{split}
  |r| \ge |r_i^\perp|\ge C |r_i|.
\end{split} \]
Summarizing, we have
\[ \begin{split}
  \left|\ip{ \frac{f_*^{N}r} {|f_*^{N} e |}, e_{N} } - \eps (r) \right|
  \le C \lambda^{2N} \sum_{i=1}^u |r_i|
  \le C u \lambda^{2N} |r|.
\end{split} \]
\end{proof}

\end{appendix}

\bibliographystyle{abbrv}
{\footnotesize\bibliography{library}}

\end{document}